\title[Matching polytopes and the totally non-negative Grassmannian]{Matching polytopes, toric geometry, and the totally 
non-negative Grassmannian}
\author{Alexander Postnikov}
\address{Massachusetts Institute of Technology}
\email{apost@math.mit.edu}
\author{David E Speyer}
\address{Massachusetts Institute of Technology}
\email{speyer@math.mit.edu}
\author{Lauren Williams}
\address{Harvard University}
\email{lauren@math.harvard.edu}
\thanks{Alexander Postnikov was supported 
in part by NSF CAREER Award DMS-0504629.  
David Speyer was supported by a research fellowship from the Clay Mathematics Institute.
Lauren Williams was supported in part by the NSF}
\subjclass[2000]{Primary 05Exx; Secondary 20G20, 14Pxx}
\keywords{Total positivity, Grassmannian,
CW complexes, Birkhoff polytope, matching, matroid polytope, cluster algebra}
\def\vblack(#1, #2)#3{\cnode*[linecolor=black](#1, #2){3}{#3}}
\def\vwhite(#1,#2)#3{\cnode[linecolor=black,fillcolor=white,fillstyle=solid](#1,#2){3}{#3}}
\def\tbox(#1,#2)#3{
\x=#1 \y=#2
\multiply\x by 12
\multiply\y by 12
\z=\x \t=\y
\advance\z by 12
\advance\t by 12
\psline(\x,\y)(\x,\t)(\z,\t)(\z,\y)(\x,\y)
\advance\x by 6
\advance\y by 6
\rput(\x,\y){{\bf #3}}}
\font\co=lcircle10
\def\jr{\rotatedown{\smash{\raise2pt\hbox{\co \rlap{\rlap{\char'005} \char'007}}
               \raise6pt\hbox{\rlap{\vrule height6.5pt}}
                \raise2pt\hbox{\rlap{\hskip4pt \vrule
          height0.4pt depth0pt
                width7.7pt}}}}}
\def\textcross{\ \smash{\lower4pt\hbox{\rlap{\hskip4.15pt\vrule height14pt}}
                \raise2.8pt\hbox{\rlap{\hskip-3pt \vrule height.4pt depth0pt
                        width14.7pt}}}\hskip12.7pt}
\def\textelbow{\ \hskip.1pt\smash{\raise2.75pt%
                \hbox{\co \hskip 4.15pt\rlap{\rlap{\char'004} \char'006}
                \lower6.8pt\rlap{\vrule height3.5pt}
                \raise3.6pt\rlap{\vrule height3.5pt}}
                \raise2.8pt\hbox{%
                  \rlap{\hskip-7.15pt \vrule height.4pt depth0pt
width3.5pt}%
                  \rlap{\hskip4.05pt \vrule height.4pt depth0pt
width3.5pt}}}
                \hskip8.7pt}
\theoremstyle{plain}
\newtheorem{theorem}{Theorem}[section]
\newtheorem{proposition}[theorem]{Proposition}
\newtheorem{lemma}[theorem]{Lemma}
\newtheorem{example}[theorem]{Example}
\newtheorem{corollary}[theorem]{Corollary}
\theoremstyle{definition}
\newtheorem{remark}[theorem]{Remark}
\newtheorem{definition}[theorem]{Definition}
\def\R{\mathbb{R}}
\def\Z{\mathbb{Z}}
\def\C{\mathbb{C}}
\def\M{\mathcal{M}}
\def\P{\mathbb{P}}
\def\wind{\mathrm{wind}}
\def\weight{\mathrm{weight}}
\def\mytilde{\kern-.015in\hbox{\lower.03in\hbox{\~{}}}\kern-.01in}
\def\Faces{\mathrm{Faces}}
\def\Meas{\mathrm{Meas}}
\def\tMeas{\widetilde{M}eas}
\def\wind{\mathrm{wind}}
\def\O{\mathcal{O}}
\def\Faces{\mathrm{Faces}}
\def\Edges{\mathrm{Edges}}
\newcommand{\thmrefer}[1]{\renewcommand\thetheorem
  {\protect\ref{#1}}\addtocounter{theorem}{-1}}
\begin{document}

\begin{abstract}
In this paper we use toric geometry to investigate the topology of 
the totally non-negative part of the Grassmannian, denoted $(Gr_{k,n})_{\geq 0}$.
This is a cell complex whose cells $\Delta_G$ can be parameterized
in terms of the combinatorics of plane-bipartite graphs $G$.
To each cell $\Delta_G$ we associate a 
certain
polytope $P(G)$.  The polytopes $P(G)$ are  analogous to
the well-known Birkhoff polytopes, and we 
describe their face lattices
in terms of matchings and unions of matchings of $G$.  We also
demonstrate a close connection between the polytopes $P(G)$ and
matroid polytopes.
We use the data of $P(G)$ to define
an associated toric variety $X_G$.  We use our technology to prove that
the cell decomposition of $(Gr_{k,n})_{\geq 0}$ is a CW complex, and 
furthermore, that 
the Euler characteristic of the closure of each cell of $(Gr_{k,n})_{\geq 0}$ is $1$.
\end{abstract}

\maketitle

\setcounter{tocdepth}{1}
\tableofcontents

\section{Introduction}

The classical theory of total positivity concerns matrices in 
which all minors are non-negative.  While this theory was pioneered
by Gantmacher, Krein, and Schoenberg in the 1930's, the past decade
has seen a flurry of research in this area initiated by 
Lusztig \cite{Lusztig1, Lusztig2, Lusztig3}. 
Motivated by surprising connections
he discovered between his theory of canonical bases for quantum
groups and the theory of total positivity, Lusztig extended
this subject by introducing the totally non-negative 
variety $\mathbb{G}_{\geq 0}$ in an arbitrary reductive group $\mathbb{G}$ and the 
totally non-negative part $(\mathbb{G}/P)_{\geq 0}$ of a real flag variety 
$\mathbb{G}/P$.  

Recently Postnikov \cite{Postnikov} investigated the 
combinatorics of the totally non-negative part of a 
Grassmannian $(Gr_{k,n})_{\geq 0}$: he established a relationship
between $(Gr_{k,n})_{\geq 0}$ and certain planar bicolored graphs,
producing a combinatorially explicit cell decomposition
of $(Gr_{k,n})_{\geq 0}$.  To each such graph $G$ he constructed 
a parameterization $\Meas_G$ of a corresponding cell of 
$(Gr_{k,n})_{\geq 0}$ by $(\R_{> 0})^{\# \Faces(G)-1}$.  
In fact, this cell decomposition
is a special case of a cell 
decomposition of $(\mathbb{G}/P)_{\geq 0}$
which was conjectured by 
Lusztig and proved by Rietsch \cite{Rietsch1}, although that 
cell decomposition was described in quite different terms.
Other combinatorial aspects of $(Gr_{k,n})_{\geq 0}$, and more 
generally of $(\mathbb{G}/P)_{\geq 0}$, were investigated
by Marsh and Rietsch \cite{MR}, Rietsch \cite{Rietsch2},
 and the third author \cite{Williams1, Williams2}.


It is known that $(\mathbb{G}/P)_{\geq 0}$ is contractible \cite{Lusztig1}
and it is conjectured that $(\mathbb{G}/P)_{\geq 0}$ with 
its cell decomposition is a regular CW complex homeomorphic to a ball.  In \cite{Williams2},
the third author proved the combinatorial analogue of this conjecture, 
proving that the partially ordered set (poset) of cells of 
$(\mathbb{G}/P)_{\geq 0}$ is in fact the poset of cells of a regular CW complex homeomorphic to a ball.

In this paper we give an approach to this conjecture which uses toric geometry
to extend $\Meas_G$ to a 
map onto the closure of the corresponding cell of $(Gr_{k,n})_{\geq 0}$.
Specifically, given a {\it plane-bipartite} graph $G$, we construct a toric variety $X_G$ and a rational map  $m_G : X_G \to Gr_{k,n}$. We show that $m_G$ is well-defined on the totally non-negative part of $X_G$ and that its image is the closure of the corresponding cell of $(Gr_{k,n})_{\geq 0}$. 
The totally non-negative part of $X_G$ is homeomorphic to 
a certain polytope (the {\it moment polytope}) 
which we denote $P(G)$, so we can equally well think of this result as a parameterization of our cell by $P(G)$. 
The restriction of $m_G$ to the toric interior of the non-negative part of $X_G$ (equivalently, to the interior of $P(G)$) is $\Meas_G$.

Our technology proves that the cell decomposition of the totally non-negative part of the Grassmannian 
is in fact a CW complex.  While our map $m_G$ is 
well-defined on $(X_G)_{\geq 0}$ (which is a closed ball) and 
is a homeomorphism on the interior, in general $m_G$ is not a homeomorphism on the boundary of 
$(X_G)_{\geq 0}$; therefore this does not lead directly to a proof of the conjecture.
However, we do obtain more evidence that the conjecture is true: using Williams' result \cite{Williams2}
that the face poset of $(\mathbb{G}/P)_{\geq 0}$ is {\it Eulerian}, it follows that
the Euler characteristic of the closure of each cell of $(Gr_{k,n})_{\geq 0}$ is $1$.

The most elegant part of our story is how the combinatorics of the 
plane-bipartite graph $G$ reflects
the structure of the 
polytope $P(G)$ and hence the structure of $X_G$.  See Table 
\ref{PlabicTable} for some of these connections.  
The torus fixed points of $X_G$ correspond to 
{\it perfect orientations} of $G$, 
equivalently, to {\it almost perfect matchings} of $G$. 
The other faces of $X_G$ correspond to certain 
\emph{elementary subgraphs} of $G$, that is, to unions of almost perfect matchings 
of $G$. Every face of $X_{G}$ is of the form $X_{G'}$ for some plane-bipartite
graph $G'$ obtained by deleting some edges of $G$, and 
$m_{G}$ restricted to $X_{G'}$ is $m_{G'}$. 
It will follow from this that, for every face $Z$ of $X_G$, the interior of $Z$ is mapped to the 
interior of a cell of the totally non-negative Grassmannian with fibers that 
are simply affine spaces. We hope that this explicit description of the 
topology of the parameterization will be useful in studying the topology 
of $(Gr_{k,n})_{\geq 0}$.

\begin{table}[h]
\begin{tabular}{|p{7.5 cm}|p{4 cm}|}
\hline
{\bf Plane-Bipartite graph $G$} & {\bf Polytope $P(G)$} \\
\hline 
$\# \Faces(G) - 1$ & Dimension of $P(G)$ \\
\hline
Perfect orientations / almost perfect matchings &
  Vertices of $P(G)$ \\
\hline
Equivalence classes of edges & Facets of $P(G)$ \\
\hline
Lattice of elementary subgraphs & Lattice of faces of $P(G)$  \\
\hline
\end{tabular}
\vspace{.5cm}
\caption{How $G$ reflects $P(G)$}
\label{PlabicTable}
\end{table}

The structure of this paper is as follows.  In Section~\ref{review} we review the combinatorics
of plane-bipartite graphs and perfect orientations.  Next, in Section~\ref{Toric} we review toric varieties
and their non-negative parts, and prove a lemma which is key to our CW complex result. 
We then, in Section~\ref{Match}, introduce the polytopes which will give rise to the toric varieties of interest to us.
Using these polytopes, in Section~\ref{MatroidPolytope} we make the connection between our polytopes $P(G)$ and 
matroid polytopes and explain the relation of our results to problems arising in cluster algebras and tropical geometry.
 In Section \ref{CWComplex} we use these polytopes to prove that 
 the cell decomposition of $(Gr_{k,n})_{\geq 0}$ is in fact a CW complex. 
In Section \ref{faces} we analyze the combinatorics of our polytopes in greater detail, giving a combinatorial
description of the face lattice of $P(G)$ in terms of matchings and unions of matchings of $G$.
Finally, in Section \ref{Numerology},
we calculate $f$-vectors, Ehrhart series, volumes, and the degrees of the 
corresponding toric varieties for a few small plane-bipartite graphs.

\textsc{Acknowledgements:} We are grateful to Vic Reiner for pointing
out the similarity between our polytopes $P(G)$ and Birkhoff polytopes, and to 
Allen Knutson for many helpful conversations.

\section{The totally non-negative Grassmannian and plane-bipartite graphs}
\label{review}

In this section we review some material from \cite{Postnikov}.
We have slightly modified the notation from \cite{Postnikov}
to make it more convenient for the present paper.

Recall that the (real) Grassmannian $Gr_{k,n}$ is the space of all
$k$-dimensional subspaces of $\R^n$, for $0\leq k\leq n$.  An element of
$Gr_{k,n}$ can be viewed as a full-rank $k\times n$ matrix modulo left
multiplications by nonsingular $k\times k$ matrices.  In other words, two
$k\times n$ matrices represent the same point in $Gr_{k,n}$ if and only if they
can be obtained from each other by row operations.

Let $\binom{[n]}{k}$ be the set of all $k$-element subsets of $[n]:=\{1,\dots,n\}$.
For $I\in \binom{[n]}{k}$, let $\Delta_I(A)$
denote the maximal minor of a $k\times n$ matrix $A$ located in the column set $I$.
The map $A\mapsto (\Delta_I(A))$, where $I$ ranges over $\binom{[n]}{k}$,
induces the {\it Pl\"ucker embedding\/} $Gr_{k,n}\hookrightarrow \mathbb{RP}^{\binom{n}{k}-1}$.

\begin{definition} {\rm \cite[Section~3]{Postnikov}} \
The {\it totally non-negative Grassmannian} $(Gr_{k,n})_{\geq 0}$
is the subset of the real Grassmannian $Gr_{k,n}$
that can be represented by $k\times n$ matrices $A$ 
with all  maximal minors
$\Delta_I(A)$ non-negative.

For $\M\subseteq \binom{[n]}{k}$, 
the {\it positive Grassmann cell\/} $C_\mathcal{M}$ is 
the subset of the elements in $(Gr_{k,n})_{\geq 0}$ represented by all $k\times n$ matrices $A$ with 
the prescribed collection of maximal minors strictly positive $\Delta_I(A)>0$,
for $I\in \mathcal{M}$, and the remaining
maximal minors equal to zero $\Delta_J(A)=0$, for $J\not\in \mathcal{M}$.

A subset $\M\subseteq \binom{[n]}{k}$ such that $C_\mathcal{M}$ is nonempty 
satifies the base axioms of matroid.  These special matroids are called 
{\it positroids.}
\end{definition}

Clearly $(Gr_{k,n})_{\geq 0}$ is a disjoint union of the positive Grassmann cells $C_\mathcal{M}$.
It was shown in \cite{Postnikov} that each of these cells $C_\mathcal{M}$ is really a cell,
that is, it is homeomorphic to an open ball of appropriate dimension $d$.
Moreover, one can explicitly construct a parametrization $\R_{>0}^d \buildrel\sim\over\to  C_\mathcal{M}$ 
using certain planar graphs, as follows.

\begin{definition}
A {\it plane-bipartite graph\/} is an undirected graph $G$ drawn inside a disk
(considered modulo homotopy) 
with $n$ {\it boundary vertices\/} on the boundary of the disk,
labeled $b_1,\dots,b_n$ in clockwise order, as well as some 
colored {\it internal vertices\/}.   
These  internal vertices 
are strictly inside the disk and are 
colored in black and white such that:
\begin{enumerate}
\item Each edge in $G$ joins two vertices of different colors.
\item Each boundary vertex $b_i$ in $G$ is incident to a single edge.
\end{enumerate}

A {\it perfect orientation\/} $\O$ of a plane-bipartite graph $G$ is a 
choice of directions of its edges such that each 
black internal vertex $u$ is incident to exactly one edge
directed away from $u$; and each white internal vertex $v$ is incident
to exactly one edge directed towards $v$.  
A plane-bipartite graph is called {\it perfectly orientable\/} if it has a perfect orientation.  
Let $G_\O$ denote the directed graph associated with a perfect orientation $\O$ of $G$. The {\it source set\/} $I_\O \subset [n]$ of a perfect orientation $\O$ is the set of $i$ for which $b_i$ 
is a source of the directed graph $G_\O$. Similarly, if $j \in \bar{I}_{\O} := [n] \setminus I_{\O}$, then $b_j$ is a sink of $\O$.
\end{definition}

All perfect orientations of a fixed
$G$ have source sets of the same size $k$ where
$k-(n-k) = \sum \mathrm{color}(v)\,(\deg(v)-2)$. Here the sum is over all internal vertices $v$, $\mathrm{color}(v) = 1$ for a black vertex $v$, and $\mathrm{color}(v) = -1$ for a white vertex;  
see~\cite{Postnikov}.  In this case we say that $G$ is of {\it type\/} $(k,n)$.

Let us associate a variable $x_e$ with each edge of $G$.
Pick a perfect orientation $\O$ of $G$.  For $i\in I_\O$ and $j\in \bar I_\O$, 
define the {\it boundary measurement\/} $M_{ij}$ as the following 
power series in the $x_e^{\pm 1}$:
$$
M_{ij}:=\sum_{P} (-1)^{\wind(P)}\, x^P,
$$
where the sum is over all directed paths in $G_\O$ that start at the boundary vertex $b_i$
and end at the boundary vertex $b_j$.  The Laurent monomial $x^P$ is given by $x^P:=\prod' x_{e'}/\prod'' x_{e''}$,
where the product $\prod'$ is over all edges $e'$ in $P$ directed 
from a white vertex to a black vertex,
and the product $\prod''$ is over all edges $e''$ in $P$ directed 
from a black vertex to a white vertex.
For any path $P$, let $\sigma_1$,$\sigma_2$, \ldots, $\sigma_r \in \R/2 \pi \Z$ be the directions of the edges of $P$ (in order). Let $Q$ be the path through $\R/2 \pi \Z$ which travels from $\sigma_1$ to $\sigma_2$ to $\sigma_3$ and so forth, traveling less then $\pi$ units of arc from each $\sigma_i$ to the next. The {\it winding index\/} $\wind(P)$ is the number of times $Q$ winds around the circle $\R/2 \pi \Z$, rounded to the nearest integer.
The index $\wind(P)$ is congruent to the number of self-intersections of the path $P$
modulo $2$.

\begin{remark}
Let us mention several differences in the notations given above 
and the ones from \cite{Postnikov}.
The construction in \cite{Postnikov} was done for {\it plabic graphs,} which
are slightly more general than the plane-bipartite graphs defined above.   Edges
in plabic graphs are allowed to join vertices of the same color.
One can easily transform a plabic graph into a plane-biparte graph, without
much change in the construction, by contracting edges
which join vertices of the same color, or alternatively, by inserting vertices
of different color in the middle of such edges.

Another difference is that we inverted the edge variables 
from \cite{Postnikov} for all edges directed from a black
vertex to a white vertex.

In \cite{Postnikov} the boundary measurements $M_{ij}$ were
defined for any planar directed graph drawn inside a disk.  
It was shown that one can easily transform 
any such graph into a plane-bipartite graph with a perfect 
orientation of edges that has the same boundary measurements.
\end{remark}

Let $E(G)$ denote the edge set of a plane-bipartite graph $G$, and let $\R_{>0}^{E(G)}$ denote the set of 
vectors $(x_e)_{e\in E(G)}$ with strictly positive real coordinates $x_e$.

\begin{lemma}
\label{lem:Mij_subtractive_free}
\cite[Lemma 4.3]{Postnikov}
The sum in each boundary measurement $M_{ij}$ evaluates to a subtraction-free rational expression 
in the $x_e$.  Thus it gives a well-defined positive function on $\R_{> 0}^{E(G)}$. 
\end{lemma}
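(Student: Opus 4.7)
The plan is to show the formal signed sum $M_{ij} = \sum_P (-1)^{\wind(P)} x^P$ collapses to a positive rational function in the $x_e$, by combining two ingredients: a sign-reversing involution that cancels self-intersecting paths, and a loop-erased decomposition that packages the remaining infinite families into geometric series.

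First I would exploit the parity congruence noted just above the lemma, namely that $\wind(P)$ agrees modulo $2$ with the number of self-intersections of $P$. On the set of paths from $b_i$ to $b_j$ that have at least one self-intersection, I would define an involution $\iota$ as follows: locate the first self-intersection of $P$ (the smallest pair of times $t_1 < t_2$ for which $P(t_1) = P(t_2)$), and swap the two branches of $P$ at that vertex, for instance by reattaching the subpath from $t_1$ to $t_2$ after the rest of the path, so that the parity of the total number of self-intersections flips. Because $\iota(P)$ traverses the same multiset of directed edges as $P$, we have $x^{\iota(P)} = x^P$, and the signed contributions of $P$ and $\iota(P)$ cancel pairwise.

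Second, to interpret the surviving contributions as a finite rational expression, rather than an infinite sum over paths of unbounded length, I would use the loop-erased walk decomposition: every directed path $P$ factors uniquely into a simple path $P_0$ from $b_i$ to $b_j$ together with a sequence of oriented loops in $G_\O$ attached at vertices of $P_0$. Summing over all ways to decorate $P_0$ by loops at each vertex $v$ produces a product of geometric series $\prod_v 1/(1 - w_v)$, where $w_v$ is the total weight of closed cycles based at $v$. A direct check using the perfect-orientation sign conventions (each cycle has an even number of edges, balancing the numerator and denominator contributions to $x^P$, while the winding of a planar cycle is $\pm 1$) shows that each $w_v$ is a subtraction-free monomial in the $x_e$, so each factor $1/(1 - w_v)$ and hence the collected denominator of $M_{ij}$ is subtraction-free.

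The hardest step will be coordinating these two ingredients: the involution $\iota$ must respect the loop-erased classes so that cancellation occurs within each class, independently of the geometric-series summation over loops attached to a common backbone. I expect this will require refining $\iota$ to permute only paths sharing the same $P_0$, and then arguing separately that the non-simple paths within each class cancel pairwise, leaving only the bare backbone $P_0$ weighted by its geometric-series decoration. Once this coordination is verified, $M_{ij}$ is manifestly a ratio of subtraction-free polynomials in the $x_e$, which yields a well-defined positive function on $\R_{>0}^{E(G)}$.
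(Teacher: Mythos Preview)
The paper does not give its own proof of this lemma; it cites \cite[Lemma~4.3]{Postnikov} and illustrates the phenomenon with the example $M_{12} = abd - abcbd + abcbcbd - \cdots = abd/(1+bc)$ immediately afterward. Postnikov's argument proceeds by induction on the size of the network: one removes or contracts an edge and expresses each $M_{ij}$ for $G$ as a subtraction-free rational combination of boundary measurements of a smaller graph, with the identity $\sum_{k\ge 0}(-w)^k = 1/(1+w)$ arising when one sums over the number of times a path traverses a fixed loop.

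Your proposal has two genuine gaps. First, the involution $\iota$ is not well defined. In a \emph{directed} graph, ``reattaching the subpath from $t_1$ to $t_2$ after the rest of the path'' does not produce a walk from $b_i$ to $b_j$: the segment $P[t_2,\text{end}]$ terminates at $b_j$, not at $v$, so there is nowhere to append the loop $P[t_1,t_2]$. More generally, there is no evident rearrangement of a \emph{single} self-intersecting directed walk that preserves its edge multiset while flipping the parity of its self-intersection count; the tail-swap you seem to have in mind is a two-path involution of Lindstr\"om--Gessel--Viennot type, and it does not transplant to this setting.

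Second, your treatment of the loop factor is internally inconsistent and misses the key mechanism. You write the contribution at $v$ as $1/(1-w_v)$ and then assert that $w_v$ is a ``subtraction-free monomial'' so that no minus sign appears. But $w_v$, the total weight of closed walks (or even simple cycles) based at $v$, is a \emph{sum} over many cycles, not a single monomial---perfect orientations constrain one edge at each vertex but still allow branching---and $1/(1-w_v)$ visibly contains a subtraction regardless. The point you are missing, and which the paper's example makes explicit, is that the winding of each simple planar loop contributes a sign of $-1$, so the relevant geometric series is $\sum_{k\ge 0}(-w)^k = 1/(1+w)$ with $w$ a \emph{positive} expression. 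It is precisely this plus sign in the denominator that delivers subtraction-freeness; your formulation never isolates it. Finally, the ``coordination'' step you yourself flag as the hardest part is not carried out at all, so even granting the two ingredients the argument would be incomplete.
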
 

For example, suppose that $G$ has two boundary vertices, $1$ and $2$ and two internal vertices $u$ and $v$, with edges $a$, $b$, $c$ and $d$ running connecting $1 \to u$, $u \to v$, $v \to u$ and $v \to 2$. Then $M_{12} = abd - abcbd + abcbcbd - \cdots = abd/(1+bc)$. The sum only converges when $|bc| < 1$ but, by interpreting it as a rational function, we can see that it gives a well defined value for any $4$-tuple $(a,b,c,d)$ of positive reals.

If the graph $G_\O$ is acyclic then there are finitely many directed paths $P$,
and $\wind(P)=0$ for any $P$.  In this case the $M_{ij}$ are clearly Laurent polynomials in the $x_e$
with positive integer coefficients, and the above lemma is trivial.

For a plane-biparte graph $G$ of type $(k,n)$ and a perfect orientation $\O$ with the source set $I_\O$, 
let us construct the $k\times n$ matrix $A=A(G,\O)$ such that
\begin{enumerate}
\item The $k\times k$ submatrix of $A$ in the column set $I_\O$ is the identity matrix.
\item  For any $i\in I_\O$ and $j\in \bar I_\O$, the minor $\Delta_{(I_\O\setminus \{i\})\cup \{j\}}(A)$ equals 
$M_{ij}$.
\end{enumerate}
These conditions uniquely define the matrix $A$.  Its entries outside the column set $I_\O$ are
$\pm M_{ij}$.  The matrix $A$ represents an element of the Grassmannian $Gr_{k,n}$.
Thus, by Lemma~\ref{lem:Mij_subtractive_free},
it gives the well-defined {\it boundary measurement map\/}
$$
\Meas_G:\R_{>0}^{E(G)}\to Gr_{k,n}.
$$

Clearly, the matrix $A(G,\O)$ described above will be different for different
perfect orientations $\O$ of $G$.  However, all these different matrices
$A(G,\O)$ represent the same point in the Grassmannian $Gr_{k,n}$.

Note that once we have constructed the matrix $A$, we can determine
which cell of $(Gr_{k,n})_{\geq 0}$ we are in by simply noting
which maximal minors are nonzero and which are zero.

\begin{proposition} 
\label{prop:same}
\cite[Theorem 10.1]{Postnikov}
For a perfectly orientable plane-bipartite graph $G$, the boundary measurement map
$\Meas_G$ does not depend on a choice of perfect orientation of $G$.
\end{proposition}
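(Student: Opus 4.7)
The plan is to show that changing the perfect orientation does not alter the point in $Gr_{k,n}$ represented by $A(G,\O)$, by reducing to elementary moves on perfect orientations and tracking how the boundary measurements transform. First I would analyze how two perfect orientations $\O$ and $\O'$ of $G$ can differ: the set of edges on which $\O$ and $\O'$ disagree has, at every internal vertex, equal in- and out-degree relative to either orientation, so it decomposes as a disjoint union of directed cycles of $G_{\O}$ together with directed paths in $G_{\O}$ connecting distinct boundary vertices. Consequently $\O$ and $\O'$ can be connected by a sequence of elementary moves, each of which reverses a single directed cycle or reverses a single directed boundary-to-boundary path.

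Next I would verify invariance under each elementary move. For a cycle reversal the source set $I_{\O}$ is unchanged, and every boundary measurement $M_{ij}$ is modified by the same geometric-series correction factor (a rational expression in the edge weights around the reversed cycle), computed in the spirit of the example following Lemma~\ref{lem:Mij_subtractive_free}; since the entire matrix $A(G,\O)$ is thereby multiplied by a common scalar, the point of $Gr_{k,n}$ it represents is unchanged. For the reversal of a path between $b_i\in I_\O$ and $b_j\in\bar I_\O$, the source set changes to $I_{\O'}=(I_\O\setminus\{i\})\cup\{j\}$, and I would show that $A(G,\O')$ is obtained from $A(G,\O)$ by a pivoting operation on the $(i,j)$-entry, i.e.\ by row operations that convert the $j$-th column to the appropriate standard basis vector. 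Row operations leave the Grassmann element fixed, so the claim follows for this move as well.

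The cleanest way to perform both checks simultaneously is via a Lindström–Gessel–Viennot style formula expressing $\Delta_J(A(G,\O))$ as a signed sum over families of edge-disjoint directed paths from $I_\O\setminus J$ to $J\setminus I_\O$ in $G_\O$, with signs controlled by $\wind(P)$. Taking symmetric difference with a fixed reference flow interchanging $\O$ and $\O'$ gives a bijection between such families for $\O$ and for $\O'$, and this bijection preserves total weights up to an overall $J$-independent scalar. I expect the main obstacle to be sign bookkeeping in the non-acyclic case: cycle reversals can turn a non-self-intersecting path family into a self-intersecting one, and one must show that the shifts in $\wind(P)$ conspire so that every Plücker coordinate is rescaled by the same factor. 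Once this sign-matching is established, Proposition~\ref{prop:same} follows because two matrices whose Plücker coordinates agree up to a common scalar represent the same point in $Gr_{k,n}$.
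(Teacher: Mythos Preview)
The paper does not give its own proof of this proposition: it is simply quoted from \cite[Theorem~10.1]{Postnikov} and used as a black box. So there is no argument in the present paper to compare your proposal against. Your outline---decompose the difference of two perfect orientations into directed cycles and boundary-to-boundary paths (as the paper itself does later in Lemma~\ref{differ}), then check invariance under each elementary reversal, or alternatively use a Lindstr\"om--Gessel--Viennot/Talaska-type bijection on flows---is indeed the strategy used in Postnikov's paper, and the flow bijection you describe in your final paragraph is essentially Talaska's formula (Theorem~\ref{prop:noncrossing} here).

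One point in your middle paragraph needs correcting. You say that under a cycle reversal ``the entire matrix $A(G,\O)$ is thereby multiplied by a common scalar.'' This cannot be right as stated: both $A(G,\O)$ and $A(G,\O')$ have the identity in the column set $I_\O=I_{\O'}$, so if they represent the same Grassmannian point they must be \emph{equal}, not merely proportional. What actually happens is that the unnormalized Pl\"ucker coordinates $p_J=\sum_F \weight(F)$ are all multiplied by a common ($J$-independent) factor under the symmetric-difference bijection $F\mapsto F\triangle C$, and hence each $M_{ij}=p_{(I_\O\setminus\{i\})\cup\{j\}}/p_{I_\O}$ is \emph{exactly} unchanged. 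Once you phrase it this way, your argument goes through, and the pivoting description for the path-reversal case is correct.
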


If we multiply the edge variables $x_e$ for all edges incident to an internal
vertex $v$ by the same factor, then the boundary measurement $M_{ij}$ will not
change.  Let $V(G)$ denote the set of internal vertices of $G$.  Let
$\R_{>0}^{E(G)/V(G)}$ be the quotient of $\R_{>0}^{E(G)}$ modulo the action of
$\R_{>0}^{V(G)}$ given by these rescalings of the $x_e$.  If the graph $G$ does
not have isolated connected components without boundary vertices\footnote{Clearly, we
can remove all such isolated components without affecting the boundary measurements.}, then
$\R_{>0}^{E(G)/V(G)} \simeq \R_{>0}^{|E(G)|-|V(G)|}$.
The map $\Meas_G$ induces the map
$$
\tMeas_G: \R_{>0}^{E(G)/V(G)} \to Gr_{k,n},
$$
which (slightly abusing the notation) we also call the boundary measurement map.

Talaska \cite{Talaska} has given an explicit combinatorial formula for the 
maximal minors (also called Pl\"ucker coordinates) of such matrices $A=A(G,\O)$.  
To state her result, we need a few definitions.
A {\it conservative flow} in a perfect orientation $\O$ of $G$ is a (possibly empty)
collection of pairwise vertex-disjoint oriented cycles.  (Each cycle is self-avoiding,
i.e. it is not allowed to pass through a vertex more than once.)  For 
$|J|=|I_{\O}|$, a {\it flow from $I_{\O}$ to $J$} is a collection of self-avoiding walks
and cycles, all pairwise vertex-disjoint, such that the sources of these walks are $I_{\O} \setminus (I_{\O} \cap J)$
and the destinations are $J \setminus (I_{\O} \cap J)$.  So a conservative flow can also be described as a flow from $I_{\O}$ to $I_{\O}$. The {\it weight} 
$\weight(F)$ of a flow $F$ is the product
of the weights of all its edges directed from the white to the black vertex,
divided by the product of all its edges directed from the black to the white 
vertex.\footnote{Note that here we slightly differ from Talaska's convention in order
to be consistent with our previous convention in defining $M_{ij}$.}
A flow with no edges has weight $1$.

\begin{theorem} \cite[Theorem 1.1]{Talaska}\label{TalaskaTheorem}
\label{prop:noncrossing}
Fix a perfectly orientable $G$ and a perfect orientation $\O$.
The minor $\Delta_J(A)$ of $A=A(G,\O)$, with columns in position $J$, is given by 
$$
\Delta_J = \left(\sum_{F} \weight(F)\right)/\left(\sum_{F'} \weight(F')\right).
$$
Here the sum in the numerator is over flows $F$
from $I_{\O}$ to $J$ and the sum in the denominator is over all 
conservative flows $F'$.
\end{theorem}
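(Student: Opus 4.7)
The plan is to adapt the Lindström-Gessel-Viennot (LGV) sign-reversing involution to the present setting, in which $G_\O$ need not be acyclic and the boundary measurements $M_{ij}$ carry winding signs. First, since the submatrix of $A = A(G,\O)$ in columns $I_\O$ is the identity, Laplace expansion reduces $\Delta_J(A)$ (up to a sign coming from column reorderings) to an $\ell \times \ell$ determinant in the entries $M_{ij}$, where $\ell = |I_\O \setminus J| = |J \setminus I_\O|$. Expanding this determinant by the Leibniz formula and substituting the defining path-sum for each $M_{ij}$ writes $\Delta_J$ as a formal signed sum over $\ell$-tuples of directed walks from $I_\O \setminus J$ to $J \setminus I_\O$, each walk $P$ carrying a factor $(-1)^{\wind(P)} x^P$.

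Next, I would pin down the common denominator of these rational expressions. Expanding each $M_{ij}$ as a generating function over walks and factoring out the ``loop excursions'' at visited vertices shows that $M_{ij} = P_{ij}/Z$, where $Z := \sum_{F'} \weight(F')$ is the sum over conservative flows of $G_\O$ and $P_{ij}$ is a polynomial interpretable as a signed sum over walks from $b_i$ to $b_j$ paired with vertex-disjoint conservative flows. Clearing denominators turns the desired identity into a purely polynomial one, which I would then establish by a sign-reversing involution on ``decorated $\ell$-tuples'', consisting of the walks produced by the Leibniz expansion together with the auxiliary cycle decorations produced when the denominators are cleared. The involution swaps at the first vertex-collision in a given configuration: between two walks, between a walk and a decorative cycle, between two decorative cycles, or between two strands of a single self-intersecting walk. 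The fact, asserted in the paragraph preceding the theorem, that $\wind(P)$ equals modulo $2$ the number of self-intersections of $P$ is precisely what makes the single-walk case sign-reversing.

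After cancellation, the surviving fixed points are configurations of pairwise vertex-disjoint self-avoiding walks from $I_\O \setminus J$ to $J \setminus I_\O$ augmented by disjoint self-avoiding cycles, which is exactly a flow from $I_\O$ to $J$ in the sense defined above; comparing generating functions then yields the claimed identity. The main obstacle is the sign bookkeeping: one must verify that the sign of the Leibniz permutation, the sign from the column reorderings, the winding signs of the individual walks, and the sign change produced by the tail-swap all combine so that the involution is globally sign-reversing. A subsidiary difficulty is the canonical choice of ``first collision'', using the planar embedding of $G$ and a fixed total order on vertices or edges, so as to make the map well-defined, fixed-point-free on non-disjoint configurations, and of order two.
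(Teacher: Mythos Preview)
The paper does not give its own proof of this statement: it is quoted verbatim as \cite[Theorem~1.1]{Talaska} and used as a black box (the authors only extract Corollary~\ref{TalaskaCor} from it). So there is no ``paper's proof'' to compare against.

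That said, your outline is essentially the strategy Talaska follows in the cited paper: reduce $\Delta_J$ to an $\ell\times\ell$ minor in the $M_{ij}$, expand by Leibniz, clear a common denominator equal to the conservative-flow partition function, and cancel via a tail-swapping sign-reversing involution whose fixed points are exactly the vertex-disjoint flows. Your identification of the two genuine difficulties is accurate. The step ``$M_{ij}=P_{ij}/Z$ with a single common $Z$'' is not as immediate as you make it sound---it is not just ``factoring out loop excursions at visited vertices'', since a priori different entries could have different denominators; Talaska handles this by a careful Fomin-style walk/cycle bijection rather than a vague factorization. And the sign verification (Leibniz sign $\times$ column-reordering sign $\times$ winding parities collapsing to $+1$ on every surviving flow) is the heart of the argument and cannot be waved through; in particular, one must use planarity to see that a vertex-disjoint family of walks between boundary vertices is automatically non-crossing, so the induced permutation is the identity. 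As a sketch your proposal is on the right track, but both of these points would need to be written out in full to constitute a proof.
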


A point in the Grassmannian only depends on its Pl\"ucker coordinates up to multiplication by a common scalar. 
For our purposes, it is best to clear the denominators in Theorem~\ref{TalaskaTheorem}, and give a purely (Laurent)
polynomial formula:
\begin{corollary} \label{TalaskaCor}
Using the notation of Theorem~\ref{TalaskaTheorem}, the point of $Gr_{k,n}$ corresponding to the row span of $A$ has Pl\"ucker coordinates
$$p_J :=  \left(\sum_{F} \weight(F)\right)$$
where the sum is over flows $F$ from $I_{\O}$ to $J$.
\end{corollary}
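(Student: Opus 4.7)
The plan is to observe that Corollary \ref{TalaskaCor} is an almost immediate consequence of Theorem \ref{TalaskaTheorem}, obtained by clearing denominators. The only conceptual content is that a point of the Grassmannian is determined by its Pl\"ucker coordinates only up to a common nonzero scalar, so we are free to rescale all the $\Delta_J(A)$ simultaneously by any quantity that is independent of $J$.

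First, I would isolate the key structural feature of the formula in Theorem \ref{TalaskaTheorem}: the denominator $\sum_{F'} \weight(F')$, where $F'$ ranges over conservative flows in $G_\O$, depends only on the graph $G$, the perfect orientation $\O$, and the edge weights $x_e$, but \emph{not} on the subset $J \in \binom{[n]}{k}$. Call this common denominator $D(G,\O)$. Then Theorem \ref{TalaskaTheorem} reads
\[
\Delta_J(A) \;=\; \frac{1}{D(G,\O)} \sum_{F} \weight(F),
\]
where the sum is over flows from $I_\O$ to $J$, uniformly in $J$.

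Next I would note that $D(G,\O)$ is nonvanishing on $\R_{>0}^{E(G)}$: the empty collection of cycles is by definition a conservative flow, contributing a weight of $1$, and all other terms $\weight(F')$ are subtraction-free rational expressions in positive variables $x_e$, hence positive. Thus $D(G,\O) > 0$ everywhere on the positive part of the edge-weight space, and rescaling by $D(G,\O)$ is a legitimate operation on the tuple of Pl\"ucker coordinates.

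Finally, I would conclude: multiplying the $\binom{n}{k}$-tuple $(\Delta_J(A))_{J}$ by the common nonzero scalar $D(G,\O)$ produces another representative of the same point of $\mathbb{RP}^{\binom{n}{k}-1}$, and hence of the same point of $Gr_{k,n}$ under the Pl\"ucker embedding. The rescaled coordinates are precisely $p_J = \sum_F \weight(F)$, summed over flows $F$ from $I_\O$ to $J$, which is the desired formula. There is no serious obstacle here; the only thing to be careful about is that ``Pl\"ucker coordinates'' are genuinely projective, which is why it is allowed to clear the $J$-independent denominator even though the affine matrix $A = A(G,\O)$ itself has entries equal to the unscaled $\pm M_{ij}$.
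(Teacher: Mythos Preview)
Your proposal is correct and matches the paper's approach: the paper does not even give a separate proof of this corollary, presenting it simply as the result of clearing the $J$-independent denominator in Theorem~\ref{TalaskaTheorem} and using that Pl\"ucker coordinates are projective. Your added remark that $D(G,\O)>0$ (because the empty conservative flow contributes $1$ and all other $\weight(F')$ are positive Laurent monomials in the $x_e$) makes explicit the one point the paper leaves implicit.
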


Theorem~\ref{prop:noncrossing} implies that the image of the boundary 
measurement map $\tMeas_G$ lies in the totally non-negative Grassmannian
$(Gr_{k,n})_{\geq 0}$.  Moreover, the image is equal to a certain 
positive cell in $(Gr_{k,n})_{\geq 0}$.

\begin{proposition}
\label{prop:plabic_cells}
\cite[Theorem~12.7]{Postnikov}
Let $G$ be any perfectly orientable plane-bipartite graph of type $(k,n)$.
Then the image of the boundary measurement map $\tMeas_G$ is a certain positive Grassmann cell 
$C_\mathcal{M}$ in $(Gr_{k,n})_{\geq 0}$. 
For every cell $C_\mathcal{M}$ in $(Gr_{k,n})_{\geq 0}$, there is a perfectly orientable plane-bipartie graph $G$ 
such that $C_\mathcal{M}$ is the image of $\tMeas_G$. 
The map $\tMeas_G$ is a fiber bundle with fiber an 
$r$-dimensional affine space, for some non-negative $r$.
For any cell of $(Gr_{k,n})_{\geq 0}$, we can always choose a graph $G$ such that $\tMeas_G$ is a homeomorphism onto this cell. 
\end{proposition}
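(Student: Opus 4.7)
The plan is to prove the four assertions in sequence, using Corollary~\ref{TalaskaCor} as the main computational tool throughout.

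For the first assertion, the key observation is that Corollary~\ref{TalaskaCor} already does most of the work. For any choice of edge weights in $\R_{>0}^{E(G)}$, the Plücker coordinate $p_J$ of the image equals $\sum_F \weight(F)$, a sum of positive Laurent monomials indexed by flows from $I_\O$ to $J$. Hence $p_J \geq 0$, and $p_J > 0$ iff there exists at least one flow from $I_\O$ to $J$. Let $\M \subseteq \binom{[n]}{k}$ be the collection of $J$ for which such a flow exists; then the image of $\tMeas_G$ lies in $C_\M$. One must then check that $\M$ actually satisfies the matroid base exchange axiom; this follows from a standard swap argument on the symmetric difference of two vertex-disjoint flows.

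The hardest step is surjectivity of $\tMeas_G$ onto $C_\M$ (together with the fiber bundle structure). I would introduce a collection of local moves on plane-bipartite graphs --- contracting or expanding degree two vertices, square moves on internal 4-cycles, and removal of parallel edges between unicolored vertices --- and verify that each either preserves $\tMeas_G$ (after a monomial change of edge variables) or corresponds to a straightforward quotient by an extra $\R_{>0}$-action. Using these moves one reduces to a \emph{reduced} representative in each move equivalence class. For a reduced graph $G$, I would construct an explicit inverse of $\tMeas_G$ by solving Talaska's formulas: each edge variable should be expressible as a subtraction-free rational function in the $p_J$ using a judiciously chosen set of faces of $G$, exploiting the dictionary between faces and elements of the positroid. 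The fiber bundle structure then emerges directly: non-reduced graphs carry extra edge-weight degrees of freedom beyond the $V(G)$-gauge, each such freedom contributing an affine $\R$-fiber.

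For the classification part --- that every cell $C_\M$ arises from some $G$ --- one uses the combinatorial description of positroids by $\Le$-diagrams or decorated permutations (from \cite{Postnikov}), and associates to each such combinatorial object an explicit plane-bipartite graph; Talaska's formula then verifies that the resulting image is precisely the expected cell.

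The main obstacle I anticipate is the inversion step for reduced graphs. The quantities $p_J$ are given by potentially exponentially many flow terms, yet one needs to solve for each edge weight as a subtraction-free rational expression in a small subset of the $p_J$. This requires a careful choice of which Plücker coordinates to use (one per face, roughly) and a nontrivial cancellation argument to show the resulting formula really inverts $\tMeas_G$. Establishing that this inversion is globally consistent --- compatible with the orientation-independence guaranteed by Proposition~\ref{prop:same}, and well-defined across all of $C_\M$ --- is the key technical hurdle.
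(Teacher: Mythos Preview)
The paper does not prove this proposition at all: it is stated with the citation \cite[Theorem~12.7]{Postnikov} and used as a black box throughout. There is therefore no ``paper's own proof'' to compare your proposal against.

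That said, a few comments on your sketch. Your outline is broadly the strategy of \cite{Postnikov}, but be aware of two points. First, there is a chronological wrinkle: you propose to use Corollary~\ref{TalaskaCor} (Talaska's formula) as the main engine, but Talaska's result postdates \cite{Postnikov}; the original proof in \cite{Postnikov} does not have this tool available and instead works directly with the boundary measurements $M_{ij}$ and the subtraction-free rational expressions of Lemma~\ref{lem:Mij_subtractive_free}. Using Talaska is mathematically legitimate and arguably cleaner, but it is not how the cited result was originally established. Second, your remark that one must verify the base exchange axiom for $\mathcal{M}$ is unnecessary: by definition a positroid is any $\mathcal{M}$ for which $C_{\mathcal{M}}$ is nonempty, so once you exhibit a point of $(Gr_{k,n})_{\geq 0}$ with exactly that support, $\mathcal{M}$ is automatically a positroid.

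The genuine difficulty you correctly identify---inverting $\tMeas_G$ for reduced $G$ by expressing edge (or face) weights as subtraction-free rational functions of Pl\"ucker coordinates---is indeed the heart of the matter, and in \cite{Postnikov} it is handled via the machinery of $\Le$-diagrams and the associated $\Gamma$-networks rather than by a direct inversion of flow formulas. Your proposed route through Talaska's formula and a ``one Pl\"ucker coordinate per face'' scheme would require essentially reproving the chamber Ansatz alluded to in the footnote in Section~\ref{MatroidPolytope}; this is doable but is substantially more work than your sketch suggests.
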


Let us say that a plane-bipartite graph $G$ 
is {\it reduced\/} if $\tMeas_G$ is a homeomorphism,
and $G$ has no isolated connected components
nor internal vertices incident to a single edge;
see \cite{Postnikov}.

An {\it almost perfect matching} of a plane-bipartite graph $G$ is a 
subset $M$ of edges such that each internal vertex is incident to 
exactly one edge in $M$ (and the boundary vertices $b_i$ are incident
to either one or no edges in $M$). 
There is a bijection between perfect orientations of $G$ and almost perfect
matchings of $G$ where, for a perfect orientation $\O$ of $G$, an edge $e$ is
included in the corresponding matching if $e$ is directed away 
from a black vertex
or to a white vertex in $\O$.\footnote{Note that typically 
$e$ is directed away from a black vertex if and only if it is 
directed towards a white vertex.  However, we have used the 
word {\it or} to make the bijection well-defined when
boundary vertices are not colored.}

For a plane-bipartite graph $G$ and the corresponding cell
$C_\mathcal{M} = \mathrm{Image}(\Meas_G)$ in $(Gr_{k,n})_{\geq 0}$,
one can combinatorially construct the matroid $\mathcal{M}$
from the graph $G$, as follows.

\begin{proposition}
\cite[Propostion~11.7, Lemma~11.10]{Postnikov}
A subset $I\in \binom{[n]}{k}$ is a base of the matroid $\M$ if and only 
there exists a perfect orientation $\O$ of $G$ such that $I=I_\O$.

Equivalently, assuming that all boundary vertices $b_i$ in $G$ are black,
$I$ is a base of $\M$ if and only if there exists an almost perfect
matching $M$ of $G$ such that 
$$
I = \{i\mid b_i \text{ belongs to an edge from $M$}\}. 
$$
\end{proposition}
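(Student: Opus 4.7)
The plan is to combine Talaska's formula (Corollary \ref{TalaskaCor}) with an edge-reversal argument that converts flows into new perfect orientations. Fix any perfect orientation $\O_0$ of $G$ and let $A = A(G, \O_0)$. By Corollary \ref{TalaskaCor}, the Pl\"ucker coordinate $p_I$ of the row span of $A$ equals $\sum_F \weight(F)$, where $F$ ranges over flows from $I_{\O_0}$ to $I$ in $G_{\O_0}$; every such weight is a positive rational monomial in the edge variables, so no cancellation occurs. Hence $p_I > 0$ iff at least one flow from $I_{\O_0}$ to $I$ exists. Since by Proposition \ref{prop:plabic_cells} the image of $\Meas_G$ is the cell $C_\M$, the condition that $I$ is a base of $\M$ is precisely $p_I > 0$. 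The first equivalence is thereby reduced to the combinatorial claim that a flow from $I_{\O_0}$ to $I$ in $G_{\O_0}$ exists iff some perfect orientation $\O$ of $G$ satisfies $I_\O = I$.

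For one direction, given such a flow $F$, I define $\O$ by reversing every edge of $F$ in $\O_0$. Vertex-disjointness of the components of $F$ ensures that each internal vertex $v$ is either untouched by $F$ or traversed by a single self-avoiding component with exactly one $F$-edge entering and one leaving. At a black internal vertex $v$ on $F$, the outgoing $F$-edge at $v$ is forced to be the unique outgoing edge of $\O_0$ at $v$, so reversing both $F$-edges at $v$ again leaves $v$ with exactly one outgoing edge; the argument at white vertices is symmetric, and a boundary edge at $b_i$ is reversed exactly when $i \in I_{\O_0} \triangle I$, so $I_\O = I$. For the converse, given $\O$ with $I_\O = I$, let $S$ be the set of edges on which $\O$ and $\O_0$ disagree, regarded as a subgraph of $G_{\O_0}$. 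The uniqueness of the outgoing edge at a black internal vertex (and of the incoming edge at a white vertex) forces $S$ to have either $0$ edges or exactly one incoming and one outgoing edge at each internal vertex; at $b_i$, the edge lies in $S$ iff $i \in I_{\O_0} \triangle I$ and is outgoing in $G_{\O_0}$ iff $i \in I_{\O_0}$. These local conditions force $S$ to decompose as a vertex-disjoint union of directed self-avoiding paths from $I_{\O_0}\setminus I$ to $I \setminus I_{\O_0}$ together with directed cycles, i.e., a flow from $I_{\O_0}$ to $I$.

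The second equivalence is immediate from the bijection between perfect orientations and almost perfect matchings recalled just above the proposition. When every boundary vertex $b_i$ is black, the single edge at $b_i$ is directed away from $b_i$ (equivalently, $i \in I_\O$) iff it belongs to the matching $M$ associated with $\O$. Thus $I_\O = \{i : b_i \text{ is incident to an edge of } M\}$, and the second statement follows from the first. The hard part will be the edge-reversal verification: one must exploit vertex-disjointness of the flow components together with the uniqueness of the outgoing (resp. incoming) edge at each black (resp. white) internal vertex both to show that reversing a flow yields a genuine perfect orientation, and to show that the disagreement subgraph $S$ has no vertices of degree greater than two, so that it really decomposes as a flow.
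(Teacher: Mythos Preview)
The paper does not actually prove this proposition; it is quoted without proof from \cite{Postnikov} as background in Section~\ref{review}. So there is no ``paper's own proof'' to compare against directly.

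That said, your argument is correct and, in fact, assembles pieces that the paper itself later supplies. Your edge-reversal bijection between flows in a fixed $\O_0$ and perfect orientations of $G$ is exactly Lemma~\ref{differ} (and its corollary) in Section~\ref{Match}; you have essentially reproved that lemma inline. Combining it with Corollary~\ref{TalaskaCor} and the positivity of the flow weights gives the first equivalence cleanly, and your reading of the matching--orientation bijection at black boundary vertices handles the second. One small caveat on logical order: in Postnikov's original development the characterization of the bases via source sets precedes and feeds into the proof that the image of $\Meas_G$ is a single cell (your Proposition~\ref{prop:plabic_cells}), so invoking that proposition here is arguably circular relative to \cite{Postnikov}. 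Within the present paper, however, all of these are imported as black boxes, so your deduction is internally consistent; just be aware that you are not giving an independent proof in the sense of \cite{Postnikov}.
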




\section{Toric varieties and their non-negative parts}
\label{Toric}

We may define a (generalized) projective toric variety
as follows \cite{Cox, Sottile}.
Let $S=\{\mathbf{m}_i \ \vert \ i=1, \dots, \ell\}$ be any finite subset
of $\Z^n$, where $\Z^n$ can be thought of as the character group
of the torus $(\C^*)^n$.
Here
$\mathbf{m}_i=(m_{i1}, m_{i2},\dots ,m_{in})$.
Then consider
the map $\phi: (\C^*)^n \to \P^{\ell-1}$ such that
$\mathbf{x}=(x_1, \dots , x_n) \mapsto [\mathbf{x^{m_1}}, \dots , \mathbf{x^{m_\ell}}]$.
Here $\mathbf{x^{m_i}}$ denotes $x_1^{m_{i1}} x_2^{m_{i2}} \dots x_n^{m_{in}}$.
We then define the toric variety $X_S$
to be the Zariski closure of the image of this map. We write $\tilde{\phi}$ for the inclusion of $X_S$ into $\P^{\ell-1}$
The {\it real part} $X_S(\R)$ of $X_S$ is defined to be the
intersection of $X_S$ with $\R\P^{\ell-1}$; the
{\it positive part} $X_S^{>0}$ is defined to be the image of
$(\R_{>0})^n$ under $\phi$; and the {\it non-negative part}
$X_S^{\geq 0}$ is defined to be the closure of
$X_S^{>0}$ in $X_S(\R)$. We note for future reference that $X_S$, $X_S(\R)$ and $X_S^{\geq 0}$ are unaltered by translating the set $S$ by any integer vector.

Note that $X_S$ is not necessarily a toric variety in the sense of \cite{Fulton}, as
it may not be normal;
however, its normalization is a toric variety in that sense.  See \cite{Cox} for more details.

Let $P$ be the convex hull of $S$.
There is a homeomorphism from $X_S^{\geq 0}$ to $P$, known as the moment map.
(See \cite[Section 4.2, page 81]{Fulton}
and \cite[Theorem 8.4]{Sottile}).
In particular,
$X_S^{\geq 0}$ is homeomorphic to a closed ball.

%

We now prove a simple but very important lemma.

\begin{lemma}\label{important}
Suppose we have a map $\Phi: (\R_{>0})^n \to \P^{N-1}$ given by
\begin{equation*}
(t_1, \dots , t_n) \mapsto [h_1(t_1,\dots,t_n), \dots , h_N(t_1,\dots,t_n)],
\end{equation*}
where the $h_i$'s are Laurent polynomials with positive coefficients.  Let $S$ be the
set of all exponent vectors in $\Z^n$ which occur among the (Laurent) monomials
of the $h_i$'s, and let $P$ be the convex hull of the points of $S$.
Then the map $\Phi$ factors through the totally positive part
$(X_P)_{>0}$, giving a map
$\tau_{>0}: (X_P)_{>0} \to \P^{N-1}$.  Moreover $\tau_{>0}$ extends continuously to the
closure to give a well-defined map
$\tau_{\ge 0}:(X_P)_{\ge 0} \to \overline{\tau_{>0}((X_{P})_{>0})}$.
\end{lemma}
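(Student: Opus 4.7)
The plan is to realize $\Phi$ as a composition of two maps: the toric embedding $\phi \colon (\C^*)^n \to X_P \subseteq \P^{|S|-1}$ (restricted to positive reals) followed by a single non-negative linear projection from $\P^{|S|-1}$ to $\P^{N-1}$. To construct this projection, I would exploit the fact that each $h_i$ is a positive combination of monomials $t^m$ with $m \in S$. Writing $h_i(t) = \sum_{m \in S} c_{i,m}\, t^m$ with $c_{i,m} \geq 0$, the matrix $(c_{i,m})$ defines a non-negative linear map $L \colon \R^S \to \R^N$ sending $(p_m)_m$ to $\bigl(\sum_{m} c_{i,m}\, p_m\bigr)_i$. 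By construction, $\Phi(t) = [L(\phi(t))]$ for all $t \in (\R_{>0})^n$, so once I show that the projectivization $[L]$ is well-defined on all of $(X_P)_{\geq 0}$, setting $\tau_{>0} := [L]|_{(X_P)_{>0}}$ and $\tau_{\geq 0} := [L]|_{(X_P)_{\geq 0}}$ will yield both the factorization of $\Phi$ and its continuous extension.

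The main (and essentially only) technical step is to verify that $L$ does not vanish at any nonzero point of $(X_P)_{\geq 0}$. Each such point admits a representative $(p_m)_{m \in S} \in \R_{\geq 0}^S$ which is not identically zero, so I would pick some $m^* \in S$ with $p_{m^*} > 0$. Because $S$ is by definition the set of exponents appearing among the $h_i$, there exists an index $i^*$ with $c_{i^*,m^*} > 0$, and then
\[
(Lp)_{i^*} \;=\; \sum_{m \in S} c_{i^*,m}\, p_m \;\geq\; c_{i^*,m^*}\, p_{m^*} \;>\; 0.
\]
Hence $L$ avoids the center of projection on the whole of $(X_P)_{\geq 0}$, so $[L]$ restricts to a well-defined continuous map $\tau_{\geq 0} \colon (X_P)_{\geq 0} \to \P^{N-1}$, agreeing with $\tau_{>0}$ on the open part.

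Finally, the image containment $\tau_{\geq 0}((X_P)_{\geq 0}) \subseteq \overline{\tau_{>0}((X_P)_{>0})}$ follows for free from continuity of $\tau_{\geq 0}$ together with the density of $(X_P)_{>0}$ in $(X_P)_{\geq 0}$: the image of a closure lies in the closure of the image. I do not expect any genuine obstacle in this argument; the entire content is the positivity bookkeeping in the previous paragraph, which relies on two facts acting in tandem---that the $c_{i,m}$ are non-negative, and that every $m \in S$ actually occurs in some $h_i$, so that the column of $L$ indexed by $m$ is nonzero. If either condition failed, $L$ could collapse some stratum of $(X_P)_{\geq 0}$ and the extension would break down; as stated, the lemma goes through without incident.
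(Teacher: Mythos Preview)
Your argument is correct and is essentially identical to the paper's own proof: both factor $\Phi$ through the monomial map to $(X_P)_{>0}$ followed by a degree-one linear map with non-negative coefficients, and both observe that the extension to $(X_P)_{\geq 0}$ is well-defined precisely because every coordinate $x_m$ (i.e., every $m\in S$) actually appears in some $h_i$. Your write-up is slightly more explicit in isolating the pair $(m^*,i^*)$ and in noting the density argument for the image containment, but the content is the same.
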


\begin{proof}
Let $S = \{\mathbf{m_1},\dots,\mathbf{m_{\ell}}\}$.
Clearly the map $\Phi$ factors
as the composite map $t=(t_1,\dots,t_n) \mapsto
 [\mathbf{t^{m_1}}, \dots , \mathbf{t^{m_\ell}}] \mapsto [h_1(t_1,\dots,t_n),\dots,
       h_N(t_1,\dots,t_n)]$,
and the image of $(\R_{>0})^n$ under the first map is precisely
$(X_P)_{>0}$.
The second map, which we will call $\tau_{>0}$,
takes a point $[x_1,\dots, x_{\ell}]$ of $(X_P)_{>0}$ to
   $[g_1(x_1,\dots,x_{\ell}), \dots, g_N(x_1,\dots,x_{\ell})]$,
where the $g_i$'s are homogeneous polynomials of degree $1$ with positive coefficients.
By construction, each $x_i$ occurs in at least one of the $g_i$'s.

Since $(X_P)_{\geq 0}$ is the closure inside $X_P$ of $(X_P)_{>0}$,
any point $[x_1,\dots,x_{\ell}]$ of $(X_P)_{\geq 0}$ has all $x_i$'s non-negative;
furthermore, not all of the $x_i$'s are equal to $0$.  And now since the $g_i$'s
have positive coefficients and they involve {\it all} of the $x_i$'s, the image of
any point $[x_1,\dots,x_{\ell}]$ of $(X_P)_{\geq 0}$ under $\tau_{>0}$ is well-defined.
Therefore $\tau_{>0}$ extends continuously to the closure to give a well-defined map
$\tau_{\ge 0}:(X_P)_{\ge 0} \to \overline{\tau_{>0}((X_{P})_{>0})}$.

\end{proof}

In  Section \ref{CWComplex}
 we will use this lemma to prove that $(Gr_{k,n})_{\geq 0}$ 
is a 
CW complex.

\section{Matching polytopes for plane-bipartite graphs} \label{Match}

In this section we will define a family of polytopes $P(G)$ associated to
plane-bipartite graphs $G$.

\begin{definition}
Given an almost perfect 
matching of a plane-bipartite graph 
$G$, we associate to it the 0-1 vector in $\R^{E(G)}$ 
where the coordinates associated to edges in the matching are $1$ and all
other coordinates are $0$.
We define $P(G)$ to be the convex hull of these $0$-$1$ vectors.
\end{definition}

\begin{remark}
Note that more generally, we could define $P(G)$ for any 
graph $G$ with a distinguished subset of ``boundary" vertices.
Many of our forthcoming results about $P(G)$ for plane-bipartite
graphs $G$ should be extendable to this generality.
\end{remark}

Because all of the $0$-$1$ vectors above have the property that 
$\sum_{e \ni v} x_e =1$ for all internal vertices $v$ of $V(G)$, the polytope 
$P(G)$ lies in the subspace of 
$\R^{E(G)}$ defined by $\{\sum_{e \ni v} x_e =1 \ \vert \ v\in V(G) \}$.

We will now see how one can arrive at these polytopes in another way.
Recall that for each $G$ we have the boundary measurement map
$\tMeas_G: \R_{>0}^{E(G)/V(G)} \to Gr_{k,n}$.  Embedding the image
into projective space via the Pl\"ucker embedding, we have 
an explicit formula for the coordinates given by Talaska (Corollary
 \ref{TalaskaCor}).

In the following definition, we use the notation of Theorem 
\ref{TalaskaTheorem}.

\begin{definition}\label{polytopedef}
Fix a perfect orientation $\O$ of $G$.
We define $P(G,\O)$ to be the convex hull 
of the exponent vectors of the weights of all flows starting
at $I_{\O}$.  A priori this polytope lies
in $\R^{E(G)}$, but we will see that 
$P(G,\O)$ lies in a subspace of 
$\R^{E(G)}$.
\end{definition}

\begin{remark}\label{usefulforCW}
Note that what we are doing in Definition \ref{polytopedef} 
is taking the convex hull of all exponent vectors 
which occur in the $p_J(A)$ from 
Corollary~\ref{TalaskaCor}, as $J$ ranges over all subsets of columns
of size $|I_{\O}|$.
\end{remark}

We now relate $P(G)$ and $P(G, \O)$. We continue to use the notion of flows introduced in shortly before Theorem~\ref{TalaskaTheorem}.

\begin{lemma}\label{differ}
Fix a plane-bipartite graph $G$ and a perfect orientation $\O_1$.
If we choose a flow in $\O_1$ and switch the direction of all edges
in this flow, we obtain another perfect orientation.  Conversely, 
one can obtain any perfect orientation $\O_2$ of $G$
from $\O_1$ by switching all directions
of edges in a flow
in $\O_1$.  
\end{lemma}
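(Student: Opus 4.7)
The plan is to prove the two directions independently; the forward direction is a local check, while the converse requires identifying a flow from the symmetric difference of two orientations.

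For the forward direction, fix a flow $F$ in $\O_1$ from $I_{\O_1}$ to some $J$. Every internal vertex $v$ lying on $F$ is traversed by exactly one walk or cycle of $F$, arriving along one edge and leaving along another, since walks in a flow have their endpoints at boundary vertices and cycles visit each vertex with in-degree and out-degree $1$. Reversing the edges of $F$ converts exactly one $\O_1$-incoming edge at $v$ into outgoing and exactly one $\O_1$-outgoing edge at $v$ into incoming, preserving the count of outgoing edges at $v$. Internal vertices not on $F$ are unaffected, so the resulting orientation is perfect.

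For the converse, given perfect orientations $\O_1$ and $\O_2$, let $E_{\mathrm{diff}}$ be the set of edges whose direction differs. At any internal vertex $v$, the sets of $\O_1$- and $\O_2$-outgoing edges have the same cardinality (determined by the color and degree of $v$), so edges switching from outgoing to incoming must be matched one-for-one with edges switching from incoming to outgoing; hence the $E_{\mathrm{diff}}$-degree at $v$ is $0$ or $2$. At each boundary vertex the $E_{\mathrm{diff}}$-degree is $0$ or $1$. Consequently $E_{\mathrm{diff}}$ decomposes into a vertex-disjoint union of simple paths and simple cycles, with each path having both endpoints on the boundary.

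Now orient $E_{\mathrm{diff}}$ according to $\O_1$. At any intermediate internal vertex of a component, the two incident $E_{\mathrm{diff}}$-edges are precisely the symmetric difference of the outgoing-edge sets of $\O_1$ and $\O_2$ at $v$, so one is $\O_1$-outgoing and one is $\O_1$-incoming, yielding a consistent through-orientation. At a boundary endpoint $b_i$, the single $E_{\mathrm{diff}}$-edge is outgoing from $b_i$ in exactly one of $\O_1, \O_2$; tracking this identifies the path sources (under $\O_1$) with $I_{\O_1} \setminus I_{\O_2}$ and the path sinks with $I_{\O_2} \setminus I_{\O_1}$. This matches the definition of a flow from $I_{\O_1}$ to $I_{\O_2}$, and reversing it turns $\O_1$ into $\O_2$. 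The main obstacle will be the bookkeeping of orientations at internal vertices in the converse, especially keeping the conventions for black versus white vertices straight while verifying that the two $E_{\mathrm{diff}}$-edges at a through-vertex form a valid ``in-out'' pair in $\O_1$; the remainder is routine graph theory.
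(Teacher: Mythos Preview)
Your proposal is correct and follows essentially the same route as the paper: the forward direction is a local check at each internal vertex traversed by the flow, and the converse takes the set $E_{\mathrm{diff}}$ of edges on which $\O_1$ and $\O_2$ disagree, argues it has degree $0$ or $2$ at every internal vertex, and decomposes it into vertex-disjoint paths and cycles that are directed in $\O_1$ and form the required flow. You in fact go slightly further than the paper by explicitly identifying the path sources and sinks with $I_{\O_1}\setminus I_{\O_2}$ and $I_{\O_2}\setminus I_{\O_1}$.

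One small imprecision worth tightening: the ``same cardinality, one-for-one matching'' argument as written only yields that the $E_{\mathrm{diff}}$-degree at an internal vertex is \emph{even}, not that it is $0$ or $2$. To get the sharper bound, use that at a black (resp.\ white) vertex the outgoing (resp.\ incoming) set has size exactly~$1$ in each perfect orientation, so at most one edge can switch from outgoing to incoming and at most one the other way.
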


\begin{proof}
The first claim is simple: a perfect orientation is one in which
each black vertex has a unique outcoming edge and each white vertex
has a unique incoming edge.  If we switch the orientation of all edges
along one of the paths or cycles in the flow, clearly this property will
be preserved.

To see the converse, let $E'$ denote the set of edges of $G$ in which 
the orientations $\O_1$ and $\O_2$ disagree.  
It follows from the definition of perfect orientation that
every edge $e$ in $E'$ incident
to some vertex $v$ can be paired uniquely with another edge $e'$ in $E'$
which is also incident to $v$ (note that at each vertex $v$ of $G$ there
are either $0$ or $2$ incident edges which are in $E'$).  This pairing
induces a decomposition of $E'$ into a union of 
vertex-disjoint 
(undirected) 
cycles and paths.  Moreover, each such cycle or
path is directed in both $\O_1$ and $\O_2$ (but of course in
opposite directions).
This set of cycles and paths is the relevant flow.
\end{proof}

Because of the bijection between perfect orientations and almost perfect matchings
(see Section \ref{review}), Lemma \ref{differ} implies the following.

\begin{corollary}
Fix $G$ and a perfect orientation $\O$.  Flows in $\O$ are in bijection with 
perfect orientations of $G$ (obtained by reversing all edges of the flow in $\O$)
which are in bijection with almost perfect matchings
of $G$.
\end{corollary}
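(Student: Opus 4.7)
The plan is to obtain the corollary by composing two bijections, the first being furnished by Lemma~\ref{differ} and the second being the correspondence between perfect orientations and almost perfect matchings already recalled in Section~\ref{review}.

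First I would write down the map from flows in $\O$ to perfect orientations of $G$: send a flow $F$ to the orientation $\O_F$ obtained by reversing every edge of $F$. Lemma~\ref{differ} already does the two substantive parts of the work. Its first assertion says this map is well-defined, i.e. $\O_F$ is actually a perfect orientation, since reversing a vertex-disjoint union of directed cycles and paths preserves the property that each black (resp.\ white) internal vertex has exactly one outgoing (resp.\ incoming) edge. Its second assertion gives surjectivity: any perfect orientation $\O'$ of $G$ arises as $\O_F$ for some flow $F$ in $\O$.

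The one thing Lemma~\ref{differ} does not quite spell out is injectivity, so that is the step I would verify explicitly. Suppose $F_1$ and $F_2$ are two flows in $\O$ with $\O_{F_1}=\O_{F_2}=\O'$. Then the set $E'$ of edges on which $\O$ and $\O'$ disagree is intrinsically defined by the pair $(\O,\O')$, independent of $F_1,F_2$. But the edges of $F_i$ (regarded as an undirected subgraph of $G$) are precisely the edges whose orientation is flipped in passing from $\O$ to $\O_{F_i}$, namely $E'$. Hence $F_1$ and $F_2$ have the same underlying edge set, and since in each flow every edge inherits its direction from $\O$, we recover $F_1=F_2$ as directed subgraphs. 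Thus the map $F\mapsto \O_F$ is a bijection between flows in $\O$ and perfect orientations of $G$.

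For the second bijection I would simply invoke the correspondence recorded in Section~\ref{review} just before the proposition identifying the matroid $\M$: a perfect orientation $\O$ determines the almost perfect matching consisting of those edges directed away from their black endpoint (equivalently, toward their white endpoint), and this assignment is a bijection between perfect orientations and almost perfect matchings of $G$. Composing with the first bijection finishes the proof. The only real obstacle is the injectivity verification above, which is essentially bookkeeping — there is nothing deeper to do, since Lemma~\ref{differ} has already done all the combinatorial work.
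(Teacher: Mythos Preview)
Your proposal is correct and follows the same route as the paper: the paper simply records the corollary as an immediate consequence of Lemma~\ref{differ} together with the perfect-orientation/almost-perfect-matching bijection from Section~\ref{review}, without writing out a separate proof. Your version spells out the injectivity of $F\mapsto\O_F$ explicitly, which the paper leaves implicit, but this is just a fuller rendering of the same argument rather than a different approach.
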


We can now see the following.

\begin{corollary}
For any perfect orientation $\O$, the polytope $P(G,\O)$ is a translation of $P(G)$
by an integer vector.
\end{corollary}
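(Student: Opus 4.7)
The plan is to exhibit an explicit bijection between the generating lattice points of $P(G,\mathcal{O})$ and the generating lattice points of $P(G)$ under which corresponding points differ by a single, fixed integer vector. Concretely, I would take this vector to be $-\chi_{M_{\mathcal{O}}}$, where $\chi_M \in \{0,1\}^{E(G)}$ is the indicator vector of a matching $M$ and $M_{\mathcal{O}}$ is the almost perfect matching corresponding to the perfect orientation $\mathcal{O}$.

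First, apply the preceding corollary: flows $F$ starting at $I_{\mathcal{O}}$ are in bijection with almost perfect matchings of $G$, via $F \mapsto M_{\mathcal{O}_F}$, where $\mathcal{O}_F$ is the perfect orientation obtained from $\mathcal{O}$ by reversing exactly the edges of $F$ (this is Lemma \ref{differ}). So the vertices of $P(G,\mathcal{O})$ are indexed by the same set as the vertices of $P(G)$.

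The heart of the proof is a local, edge-by-edge check that, for every flow $F$ starting at $I_{\mathcal{O}}$,
\[
\text{exponent vector of } \weight(F) \;=\; \chi_{M_{\mathcal{O}_F}} \;-\; \chi_{M_{\mathcal{O}}}.
\]
Indeed, by definition of $\weight$, this exponent vector is $+1$ on edges of $F$ oriented white-to-black in $\mathcal{O}$, $-1$ on edges of $F$ oriented black-to-white in $\mathcal{O}$, and $0$ elsewhere. Under the matching/orientation bijection, an edge $e$ lies in $M_{\mathcal{O}'}$ precisely when it is directed from a black vertex to a white vertex in $\mathcal{O}'$ (extended by the stated convention at boundary vertices). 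Since $\mathcal{O}$ and $\mathcal{O}_F$ agree off $F$ and disagree on $F$, a four-way case analysis (edge in or out of $F$, and oriented black-to-white or white-to-black in $\mathcal{O}$) recovers exactly the claimed $\pm 1/0$ pattern: the two cases with $e \notin F$ contribute $0$ to the difference, while the two cases with $e \in F$ contribute $+1$ or $-1$ matching the white-to-black versus black-to-white direction in $\mathcal{O}$.

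Taking convex hulls over all flows $F$ starting at $I_{\mathcal{O}}$ then gives $P(G,\mathcal{O}) = P(G) - \chi_{M_{\mathcal{O}}}$, proving that $P(G,\mathcal{O})$ is a translate of $P(G)$ by the integer vector $-\chi_{M_{\mathcal{O}}}$. The main obstacle in this argument is simply bookkeeping the edge-coloring conventions correctly, particularly the "or" clause in the matching-orientation bijection that handles edges incident to uncolored boundary vertices; once that is pinned down, the identification of exponent vectors with differences of indicator vectors is immediate.
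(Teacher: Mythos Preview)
Your proposal is correct and follows essentially the same approach as the paper: both invoke the bijection between flows in $\O$ and almost perfect matchings from the preceding corollary, and then carry out an edge-by-edge case analysis (according to whether the edge lies in the flow and, if so, its direction in $\O$) to identify the exponent vector of $\weight(F)$ with $\chi_{M_{\O_F}} - \chi_{M_\O}$. Your version is in fact slightly cleaner, naming the translation vector $-\chi_{M_\O}$ explicitly, whereas the paper states the equivalent identity $x(M')-x(M)=x(F)-x(F')$ (with $x(F)=0$) and checks the same cases.
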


\begin{proof}
Let $F$ denote the empty flow on $\O$, 
$F'$ be some other flow in $\O$, and $\O'$ the perfect orientation obtained from $\O$
by reversing the directions of all edges in $F'$.  Let $M$ and $M'$ be the almost
perfect matchings associated to $\O$ and $\O'$.  Let $x(F)$, $x(F')$, 
$x(M)$, and $x(M')$ be the vectors in $\R^{E(G)}$ associated to this flow
and these perfect orientations.  Of course $x(F)$ is the all-zero vector.
We claim that $x(M')-x(M)=x(F)-x(F')$.  

Fix an edge $e$ of $G$: we will check that the $e$-coordinates of $x(M')-x(M)$ and $x(F)-x(F')$
are equal. First, suppose that $e$ does not occur in $F'$. Then either $e$ appears in both $M$ and $M'$, or in neither. 
So $x(F)_e=x(F')_e=0$ and either $x(M)_e=x(M')_e=0$ or $x(M)_e=x(M')_e=1$. 
Now, suppose that $e$ occurs in $F'$, and is oriented from its white to its black endpoint in $\O$.
So $x(F)_e=0$ and $x(F')=1$.
The edge $e$ occurs in the matching $M'$ and not in the matching $M$, so $x(M)_e=0$ and $x(M')_e=1$.
Finally, suppose $e$ occurs in $F'$, and is oriented from its black to its white endpoint in $\O$.
Then $x(F)_e=0$ and $x(F')=-1$. 
The edge $e$ occurs in the matching $M$ and not in the matching $M'$, so $x(M)_e=1$ and $x(M')_e=0$.
\end{proof}

In particular, up to translation, $P(G,\O)$ does not depend on $\O$. Recall that translating a polytope does not affect the corresponding toric variety. 

%

In Figure \ref{P2}, we fix a plane-bipartite graph $G$ corresponding to the cell 
of $(Gr_{2,4})_{\geq 0}$ such that the Pl\"ucker coordinates 
$P_{12}, P_{13}, P_{14}$ are positive and all others are $0$.
We display the three perfect orientations
and the vertices of  $P(G)$.  

\begin{figure}[h]
\centering
\includegraphics[height=1.5in]{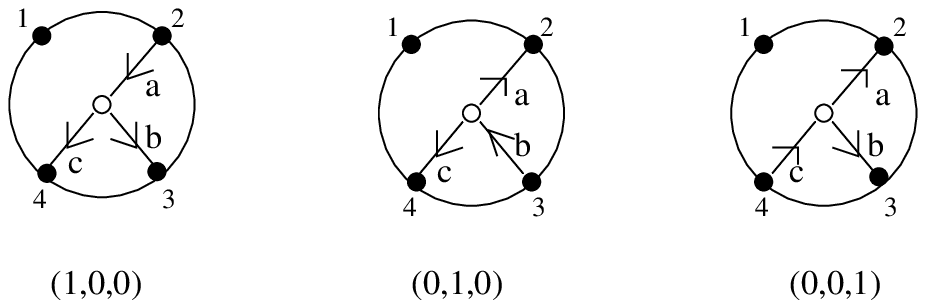}
\caption{}
\label{P2}
\end{figure}

In Figure \ref{P1P1}, we fix a plane-bipartite graph $G$ corresponding to the cell 
of $(Gr_{2,4})_{\geq 0}$ such that the Pl\"ucker coordinates 
$P_{12}, P_{13}, P_{24}, P_{34}$ are positive while 
$P_{14}$ and $P_{23}$ are $0$.
We display the four perfect orientations
and the vertices of $P(G)$.

\begin{figure}[h]
\centering
\includegraphics[height=1.5in]{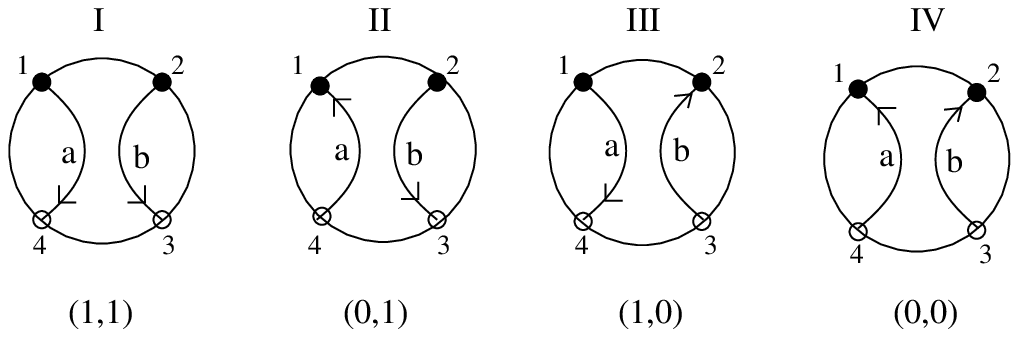}
\caption{}
\label{P1P1}
\end{figure}



In Figure \ref{Graph} we have 
fixed a plane-bipartite graph $G$ corresponding to the 
top-dimensional cell of $(Gr_{2,4})_{\geq 0}$.
$G$ has seven perfect orientations.
We have
drawn the edge graph of the
four-dimensional polytope $P(G)$.
This time
we have depicted the vertices of $P(G)$ using matchings instead of 
perfect orientations.  Next to each matching, we have also 
listed the source set of the corresponding perfect orientation.

\begin{figure}[h]
\centering
\includegraphics[height=3.2in]{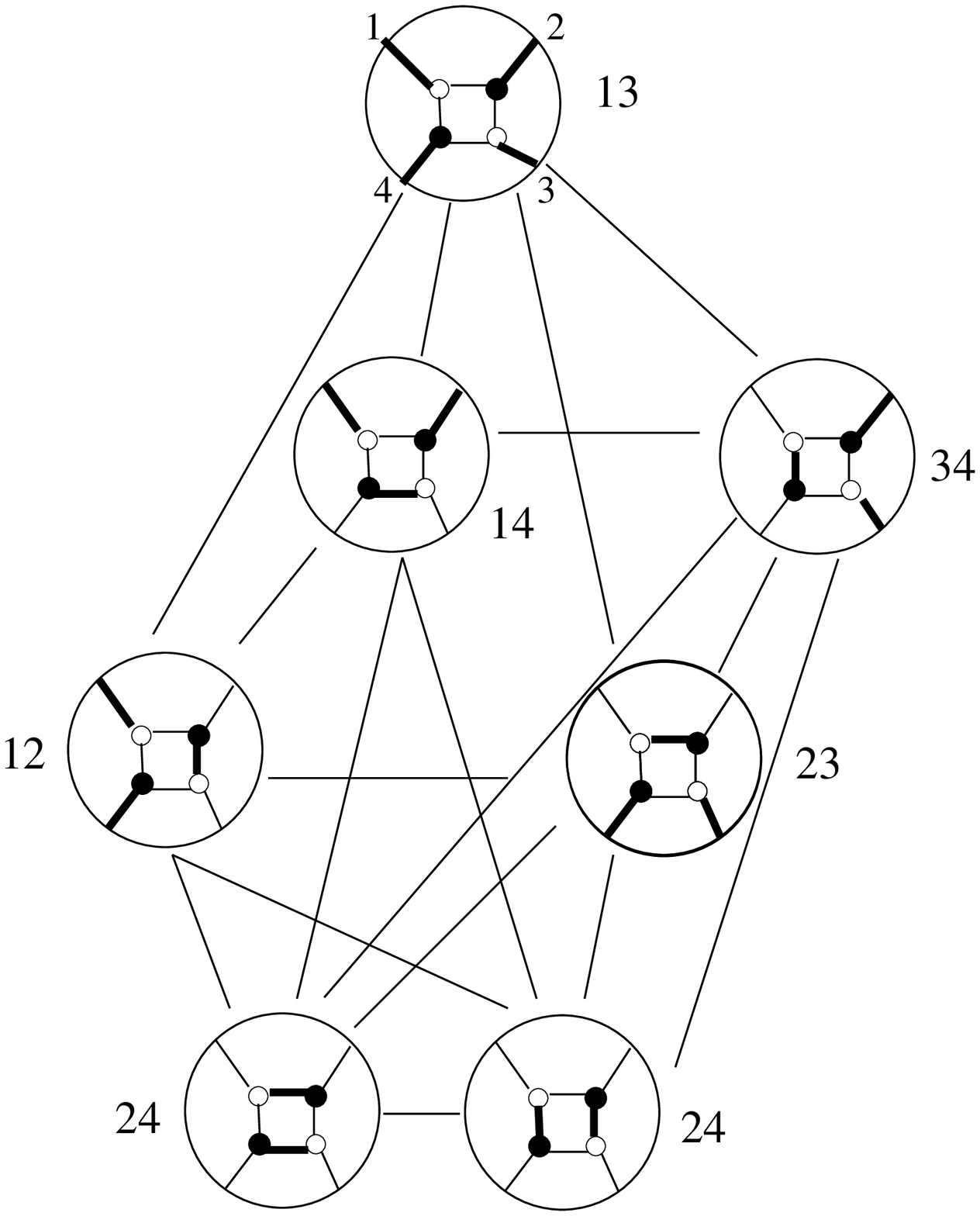}
\caption{}
\label{Graph}
\end{figure}

 \section{Connections with matroid polytopes and cluster algebras}\label{MatroidPolytope} 

 Every perfectly orientable plane-bipartite graph 
 encodes a realizable {\it positroid}, that is, an oriented matroid in which 
 all orientations are positive.  The bases of the positroid  associated to a plane-bipartite
 graph $G$ of type $(k,n)$ are precisely the $k$-element subsets $I \subset [n]$ which 
 occur as source sets of perfect orientations of $G$.  This is easy to see, 
as each 
 perfect orientation of $G$ gives rise to a parametrization of the cell $\Delta_G$ of 
 $(Gr_{k,n})_{\geq 0}$ in which the Pl\"ucker coordinate corresponding to 
the source set $I$ is $1$.
 Furthermore, if one takes a 
 (directed) path in a perfect orientation $\O$ and switches the orientation of each of its edges, this encodes a basis exchange.
 
 Given this close connection of perfectly orientable plane-bipartite graphs to positroids, it is 
 natural to ask whether there is a connection between our polytopes $P(G)$ and matroid 
 polytopes.  We first recall the definition of a matroid polytope.
 Let $M$ be a matroid of rank $k$ on the ground set $[n]$.  The 
 {\it matroid polytope} $Q(M)$ is the convex hull of the vectors 
 $\{ e(J) \ \vert \ J \text{ is a basis of }M\}$ where $e(J)$ is the 
 $0-1$ vector in $\R^n$ whose $i$th coordinate is $1$ if $i \in J$ and 
 is $0$ otherwise \cite{GGMS}.  The vertices are in one-to-one correspondence with 
 bases of $M$.  This polytope lies in the hyperplane
 $x_1 + \dots + x_n = 0$ and, if the matroid $M$ is connected, has dimension $n-1$. 

\begin{proposition}  \label{projection}
There is a linear projection $\Psi$ from $P(G)$ to 
$Q(M_G)$.  
The fibers of this projection over the vertices of $Q(M_G)$ are the 
Newton polytopes for the Laurent polynomials which express the 
Pl\"ucker coordinates on $X_G$ in terms of the edge variables. 
\end{proposition}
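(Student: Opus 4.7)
The plan is to take $\Psi$ to be the linear map that sums, for each boundary vertex $b_i$, the edge coordinates of the edges incident to $b_i$. Explicitly, define $\Psi : \R^{E(G)} \to \R^n$ by
\[
\Psi(x)_i \;=\; \sum_{e \ni b_i} x_e, \qquad i = 1,\dots,n.
\]
This is manifestly linear, so the real content is to check that $\Psi$ carries $P(G)$ onto $Q(M_G)$ and to describe its fibers. By subdividing edges if necessary, I would assume that every boundary vertex of $G$ is black, so that the last proposition of Section~\ref{review} applies.

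Next I would check that $\Psi$ maps the vertices of $P(G)$ surjectively onto the vertices of $Q(M_G)$. If $x(M) \in \R^{E(G)}$ is the $0$-$1$ indicator of an almost perfect matching $M$, then each $b_i$ meets at most one edge of $M$, so $\Psi(x(M)) = e(I_M)$ where $I_M = \{\, i : b_i \text{ is matched in } M\}$. The last proposition of Section~\ref{review} states that the sets $I_M$ are precisely the bases of $M_G$. Hence $\Psi$ sends the vertex set of $P(G)$ onto the vertex set of $Q(M_G)$, which forces $\Psi(P(G)) = Q(M_G)$. Because $\Psi$ is linear and vertices go to vertices, the fiber over $e(J)$ is the convex hull of $\{\, x(M) : I_M = J\}$.

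It remains to identify this fiber, up to the translation already present in Section~\ref{Match}, with the Newton polytope of the Pl\"ucker coordinate $p_J$ expressed in the edge variables. Fix a perfect orientation $\O$ and let $M_\O$ be its matching. By Corollary~\ref{TalaskaCor}, $p_J = \sum_F \weight(F)$, where $F$ runs over flows in $\O$ from $I_\O$ to $J$; and by Lemma~\ref{differ} (together with the bijection between orientations and matchings recalled in Section~\ref{review}), these flows are in bijection with matchings $M$ satisfying $I_M = J$, via $M = M_\O \bigtriangleup E(F)$. An edge-by-edge comparison of the sign conventions in $\weight(F)$ with those of the symmetric difference shows that the exponent vector of $\weight(F)$ equals $x(M) - x(M_\O)$: an edge of $F$ oriented white-to-black in $\O$ is precisely one that enters the matching (contributing $+1$), while a black-to-white edge is precisely one that leaves $M_\O$ (contributing $-1$). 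Therefore the Newton polytope of $p_J$ is the fiber $\Psi^{-1}(e(J)) \cap P(G)$ translated by $-x(M_\O)$; this is exactly the translation that was already identified in Section~\ref{Match} between $P(G)$ and $P(G,\O)$.

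The main obstacle is the sign bookkeeping in the final step, namely verifying the identity $(\text{exponent of }\weight(F)) = x(M) - x(M_\O)$ for the matching $M$ paired with $F$. This is a finite case-analysis on edges (in $F$ versus not, oriented white-to-black versus black-to-white in $\O$), and once it is in hand the bijection from Lemma~\ref{differ} and the Pl\"ucker formula of Corollary~\ref{TalaskaCor} assemble directly into the desired identification of the fiber with the Newton polytope.
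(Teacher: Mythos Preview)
Your argument is correct and follows the same route as the paper's proof: define $\Psi$ by recording which boundary vertices are matched (equivalently, the source set of the associated perfect orientation), invoke the last proposition of Section~\ref{review} to land in $Q(M_G)$, and then use Corollary~\ref{TalaskaCor} together with the flow/matching bijection of Lemma~\ref{differ} to identify the fiber over $e(J)$ with the Newton polytope of $p_J$. You have simply made explicit what the paper leaves to the phrase ``remember the relationship between matchings and flows'': the edge-by-edge check that the exponent vector of $\weight(F)$ equals $x(M)-x(M_\O)$ is exactly the computation carried out (up to a sign convention) in the proof that $P(G,\O)$ is a translate of $P(G)$ in Section~\ref{Match}.
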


\begin{proof} 
If $G$ is a plane-bipartite graph of type $(k,n)$, one can associate 
to each vertex $v_{M}$ of $P(G)$ the basis of the corresponding positroid 
corresponding to the boundary edges which are matched in $G$. In terms of the 
bijection between perfect matchings and perfect orientations, this is the source 
set of the corresponding perfect orientation.  This gives the linear projection 
$\Psi$ from $P(G)$ to $Q(M_G)$.  To see that the statement about the fibers
is true, 
see Corollary~\ref{TalaskaCor}, and remember the relationship 
between matchings and flows.
\end{proof}

The second and third authors, in 
\cite{SpeyerWilliams}, related the Newton polytopes of Proposition \ref{projection}
to the 
positive part of the tropical Grassmannian; our results in that 
paper can be summarized by saying that the positive part of the 
tropical Grassmannian is combinatorially isomorphic to the dual 
fan of the fiber polytope of the map $P(G) \to Q(M_G)$. 
\footnote{We worked with {\it face variables} rather than edge 
variables in \cite{SpeyerWilliams}, but the two corresponding
realizations of $P(G)$ are linearly isomorphic.}

The fact that the Pl\"ucker coordinates on $X_G$ can all be expressed as Laurent polynomials in the edge weights is not simply a fortunate coincidence, but is a consequence\footnote{This consequence is not completely straightforward; one must express certain ratios of the edge weights as Laurent monomials in the variables of a certain cluster, and this involves a nontrivial ``chamber Ansatz''.} of the fact that the coordinate ring of $X_G$ has the structure of a cluster algebra. (See \cite{FZ} for the definition of cluster algebras, \cite{Scott} for the verification that the largest cell of the Grassmannian has the structure of a cluster algebra and \cite{Postnikov} for the fact that every $X_G$ has this structure.) 
In general, if we had a better understanding of the Newton polytopes of Laurent polynomials arising from cluster algebras, we could resolve many of 
the open questions in that theory.

 \begin{example}
 Consider the plane-bipartite graph $G$ from Figure \ref{Graph}.  
 This corresponds
 to the positroid of rank two on the ground set $[4]$ such that 
 all subsets of size $2$ are independent.  The edge graph of 
 the four-dimensional polytope $P(G)$ is shown in Figure \ref{Graph},
 and each vertex is labeled with the basis it corresponds to.
 The matroid polytope of this matroid is the 
 (three-dimensional) octahedron with 
 six vertices corresponding to the two-element subsets of $[4]$.
 Under the map $\Psi$, each vertex of $P(G)$ corresponding to 
 the two-element subset $ij$ gets mapped to the vertex of the 
 octahedron whose $i$th and $j$th coordinates are $1$ (all 
 other coordinates being $0$).
 \end{example}

\section{$(Gr_{k,n})_{\geq 0}$ is a CW complex}\label{CWComplex}

We  now prove that the cell decomposition of $(Gr_{k,n})_{\geq 0}$
is a CW complex, and obtain as a corollary that the Euler characteristic
of the closure of each cell is $1$.

To review the terminology, a {\it cell complex} is 
a decomposition of a space $X$ into a disjoint union
of {\it cells}, that is 
open balls.  A {\it CW complex} is a cell complex together 
with the extra data of {\it attaching maps}.  
More specifically, each cell in a CW complex is {\it attached} by 
gluing a closed $i$-dimensional ball $D^i$ to the $(i-1)$-skeleton
$X_{i-1}$, i.e.\ the union of all lower dimensional cells.
The gluing is specified by a continuous function $f$ from 
$\partial D^i = S^{i-1}$ to $X_{i-1}$.  CW complexes are defined 
inductively as follows: Given $X_0$ a discrete space
(a discrete union of $0$-cells), and inductively constructed subspaces
$X_i$ obtained from $X_{i-1}$ by attaching some collection of $i$-cells,
the resulting colimit space $X$ is called a {\it CW complex} provided
it is given the weak topology and every closed cell is covered by a finite 
union of open cells.

Although we don't need this definition here, we note that a {\it regular}
CW complex is a CW complex such that the closure of each cell 
is homeomorphic to a closed ball and the boundary of each cell is 
homeomorphic to a sphere.  It is not known if 
the cell decomposition of $(Gr_{k,n})_{\geq 0}$ is regular, although 
the results of \cite{Williams2} suggest that the answer is yes.

To prove our main result, we will also use the following lemma, which can be
found in \cite{Postnikov, Rietsch2}. 

\begin{lemma}\cite[Theorem 18.3]{Postnikov}, \cite[Proposition 7.2]{Rietsch2}\label{Closure}
The closure of a cell $\Delta$ in $(Gr_{k,n})_{\geq 0}$ is the union of 
$\Delta$ together with lower-dimensional cells.
\end{lemma}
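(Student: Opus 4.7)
The plan is to combine the closedness of $(Gr_{k,n})_{\geq 0}$ with the toric extension of the boundary measurement map provided by Lemma~\ref{important}, deducing the dimension drop from the fact that the boundary of $(X_{P(G)})_{\geq 0}$ has strictly smaller dimension than $(X_{P(G)})_{\geq 0}$ itself. First I would observe that $(Gr_{k,n})_{\geq 0}$ is closed in $Gr_{k,n}$, since on any affine chart of the Pl\"ucker embedding it is cut out by the closed conditions $\Delta_I(A)\geq 0$ after normalizing a chosen nonvanishing minor. Consequently $\bar\Delta \subseteq (Gr_{k,n})_{\geq 0}$, and because the positive Grassmann cells partition the totally non-negative Grassmannian, every point $p\in\bar\Delta$ lies in a unique cell $C_{\M'}$. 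Moreover, if $J\notin\M$ then $\Delta_J$ vanishes identically on $\Delta=C_\M$ and hence on $\bar\Delta$ by continuity, so $\M'\subseteq\M$; in particular $\M'\subsetneq\M$ whenever $p\notin\Delta$. This exhibits $\bar\Delta$ as a union of cells $C_{\M'}$ with $\M'\subseteq\M$; it remains to show that $\M'\subsetneq\M$ forces a strict drop in dimension.

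For the dimension drop, I would choose by Proposition~\ref{prop:plabic_cells} a reduced plane-bipartite graph $G$ for which $\tMeas_G$ is a homeomorphism onto $\Delta$, and write $d$ for the dimension of $\Delta$. By Corollary~\ref{TalaskaCor}, the homogeneous Pl\"ucker coordinates of the composition of $\tMeas_G$ with the Pl\"ucker embedding are Laurent polynomials with positive coefficients in the edge variables $x_e$, so Lemma~\ref{important} produces a continuous extension $\tau\colon(X_{P(G)})_{\geq 0}\to \P^{\binom{n}{k}-1}$. The space $(X_{P(G)})_{\geq 0}$ is compact and homeomorphic (via the moment map) to $P(G)$, a closed ball of dimension $d$. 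The image $\tau((X_{P(G)})_{\geq 0})$ is therefore closed, contains $\Delta$, and sits inside $\bar\Delta$ by continuity from the dense subset $\R_{>0}^{E(G)/V(G)}$; hence it equals $\bar\Delta$. Since the interior of $(X_{P(G)})_{\geq 0}$ corresponds to $\R_{>0}^{E(G)/V(G)}$ and maps homeomorphically onto $\Delta$, one obtains $\bar\Delta\setminus\Delta\subseteq\tau(\partial(X_{P(G)})_{\geq 0})$; and since $\partial(X_{P(G)})_{\geq 0}$ is a finite union of proper faces, each semi-algebraic of dimension at most $d-1$, any cell $C_{\M'}\subseteq\bar\Delta\setminus\Delta$ has dimension at most $d-1$.

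The hardest part of the plan is this final dimension bound, since a continuous map can in general raise the dimension of its image. One cleanly resolves this either by invoking the semi-algebraic dimension theorem (all the maps and target sets here are semi-algebraic, so dimensions cannot increase under images), or by using the more refined geometric input that the restriction of $\tau$ to a boundary face $(X_{P(G')})_{\geq 0}$ coincides with $\tMeas_{G'}$ for a subgraph $G'\subsetneq G$ with strictly smaller parameter count, and then inducting on $d$. Either route completes the argument.
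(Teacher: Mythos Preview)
The paper does not prove this lemma at all; it simply cites it from \cite[Theorem~18.3]{Postnikov} and \cite[Proposition~7.2]{Rietsch2} and then uses it as an input to the CW complex theorem. So your proposal is not a comparison target but an attempt to supply an internal proof using the paper's own toric machinery. That is a reasonable and interesting idea, but as written it has a real gap.

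The sentence ``This exhibits $\bar\Delta$ as a union of cells $C_{\M'}$ with $\M'\subseteq\M$'' is not justified: what you have shown is only that $\bar\Delta\subseteq\bigcup_{\M'\subseteq\M}C_{\M'}$, not that $\bar\Delta$ is a union of \emph{full} cells. Consequently, your semi-algebraic route does not finish the argument. You correctly deduce that $\bar\Delta\setminus\Delta=\tau(\partial(X_{P(G)})_{\geq 0})$ has semi-algebraic dimension at most $d-1$, but from this you cannot conclude that the cell $C_{\M'}$ \emph{containing} a point $p\in\bar\Delta\setminus\Delta$ has dimension at most $d-1$: a priori $C_{\M'}$ could be $d$-dimensional and merely meet $\bar\Delta\setminus\Delta$ in a thin slice. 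You write ``any cell $C_{\M'}\subseteq\bar\Delta\setminus\Delta$'', but that containment is exactly what you have not established.

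Your second route---identifying the restriction of $\tau$ to a boundary face with $\tMeas_{G'}$ for a proper elementary subgraph $G'$ and inducting---does repair this, because then the image of each face is the full closure $\overline{C_{\M(G')}}$ with $\dim C_{\M(G')}<d$, and induction gives that $\bar\Delta$ is genuinely a union of lower-dimensional cells. But carrying this out requires the face-lattice description of $P(G)$ (Theorem~\ref{facelattice} and the identification $\phi(H)=P(H)$), together with the verification that $\tau|_{(X_{P(G')})_{\geq 0}}$ agrees with the boundary measurement map of $G'$ after setting the deleted edge variables to zero in Corollary~\ref{TalaskaCor}. None of this is circular, since Section~\ref{faces} does not use Lemma~\ref{Closure}, but it is substantially more than you have written. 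If you commit to the second route and spell out those identifications, you obtain a self-contained proof that avoids the external citations; the first route on its own does not.
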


\begin{theorem}
The cell decomposition of $(Gr_{k,n})_{\geq 0}$ is a finite CW complex.
\end{theorem}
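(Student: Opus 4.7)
The plan is to construct, for each positroid cell $C_\mathcal{M}$, an explicit attaching map from a closed topological ball, using Lemma~\ref{important} and the toric geometry already assembled. Given a cell $C_\mathcal{M}$ of dimension $d$, invoke Proposition~\ref{prop:plabic_cells} to fix a reduced plane-bipartite graph $G$ together with a perfect orientation $\O$, so that $\tMeas_G : \R_{>0}^{E(G)/V(G)} \to C_\mathcal{M}$ is a homeomorphism. Post-composing $\tMeas_G$ with the Pl\"ucker embedding yields a map into $\P^{\binom{n}{k}-1}$ whose homogeneous coordinates, by Corollary~\ref{TalaskaCor}, are Laurent polynomials in the edge weights with non-negative integer coefficients. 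By Remark~\ref{usefulforCW} (and Definition~\ref{polytopedef}), the Newton polytope of this family of Laurent polynomials is precisely $P(G,\O)$, which is an integer translate of $P(G)$ and hence determines the same toric variety.

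This is exactly the hypothesis of Lemma~\ref{important}. Applying it produces a continuous extension $\tau_{\ge 0} : (X_{P(G)})_{\ge 0} \to (Gr_{k,n})_{\ge 0}$ of the composite above. Via the moment map, the domain $(X_{P(G)})_{\ge 0}$ is homeomorphic to $P(G)$, which is a closed ball. Since the restriction of $\tau_{\ge 0}$ to $(X_{P(G)})_{>0}$ factors the Pl\"ucker-embedded $\tMeas_G$ and $\tMeas_G$ is a homeomorphism onto $C_\mathcal{M}$, the interior of this ball maps homeomorphically onto $C_\mathcal{M}$; in particular $\dim P(G) = d$, so we may identify the domain with a closed $d$-ball $D^d$. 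Compactness of $D^d$ forces the image of $\tau_{\ge 0}$ to be the closure $\overline{C_\mathcal{M}}$, and hence $\tau_{\ge 0}(\partial D^d) \subseteq \overline{C_\mathcal{M}} \setminus C_\mathcal{M}$.

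By Lemma~\ref{Closure}, the difference $\overline{C_\mathcal{M}} \setminus C_\mathcal{M}$ is a union of strictly lower-dimensional cells. Inducting on $d$, we may assume that the cells of dimension less than $d$ already constitute a CW subcomplex, and then $\tau_{\ge 0}$ serves as a characteristic attaching map for $C_\mathcal{M}$. Since there are only finitely many positroids in $\binom{[n]}{k}$, assembling these attaching maps for all cells yields a \emph{finite} CW structure on $(Gr_{k,n})_{\ge 0}$, for which the weak topology automatically agrees with the subspace topology inherited from $Gr_{k,n}$.

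The hard part is the clean application of Lemma~\ref{important}: we must verify that the Newton polytope appearing there is exactly $P(G)$ up to translation (the content of Remark~\ref{usefulforCW}), that $\dim P(G)$ matches the cell dimension (which needs $G$ reduced, so that $\tMeas_G$ is a genuine homeomorphism rather than merely a fibration with positive-dimensional fibers), and that $\tau_{\ge 0}(\partial D^d)$ cannot reenter the open cell $C_\mathcal{M}$ (prevented by the interior of $D^d$ already covering $C_\mathcal{M}$ homeomorphically). All three points are built into the hypotheses we have arranged, so once they are checked the CW axioms are immediate.
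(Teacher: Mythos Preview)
Your argument is correct and follows essentially the same route as the paper's own proof: both factor the Pl\"ucker-embedded boundary measurement map through the non-negative part of the toric variety $X_{P(G)}$ via Lemma~\ref{important} and Corollary~\ref{TalaskaCor}, identify $(X_{P(G)})_{\ge 0}$ with a closed ball via the moment map, and then invoke Lemma~\ref{Closure} to see that the boundary lands in the lower skeleton. Your version is, if anything, slightly more careful than the paper's in explicitly choosing a \emph{reduced} $G$ so that $\tMeas_G$ is a homeomorphism (and hence $\dim P(G)=d$), and in spelling out why $\tau_{\ge 0}(\partial D^d)$ misses the open cell.
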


\begin{proof}
All of these cell complexes contain only finitely many cells; therefore
the closure-finite condition in the definition of a CW complex
is automatically satisfied.
What we need to do is define the attaching maps for the cells: 
we need to prove that for each $i$-dimensional cell
there is a continuous map 
$f$ from $D^i$ to $X_{i}$ which maps 
$\partial D^i = S^{i-1}$ to $X_{i-1}$ and extends
the parameterization of the cell (a map from the interior
of $D^i$ to $X_{i}$).

By Corollary~\ref{TalaskaCor}, if we are
given a perfectly orientable plane-bipartite graph $G$, the image of the parameterization 
$\Meas_G$ of the cell $\Delta_G$ under the Pl\"ucker embedding can be described 
as a map $(t_1, \dots , t_n) \mapsto [h_1(t_1,\dots,t_n), \dots , h_N(t_1,\dots,t_n)]$ 
to projective space, 
where the $h_i$'s are Laurent polynomials with positive coefficients.  
By Lemma \ref{important} and Remark \ref{usefulforCW}, 
the map $\Meas_G$ gives rise to a rational map 
$m_G: X_{P(G)} \to Gr_{k,n}$ which is 
well-defined on $(X_{P(G)})_{\geq 0}$ (a closed ball).  
Furthermore, it is clear that 
the image of $m_G$ on $(X_{P(G)}){\geq 0}$ lies in $(Gr_{k,n})_{\geq 0}$.

Since the totally positive part of the toric variety $X_{P(G)}$ is dense in the 
non-negative part, and the interior gets mapped to the cell 
$\Delta_G$, it follows that $(X_{P(G)})_{\geq 0}$ gets mapped to the 
closure of $\Delta_G$.  Furthermore, 
by construction, $(X_{P(G)})_{>0}$ maps homeomorphically to the 
cell $\Delta_G$.

And now by Lemma \ref{Closure}, it follows
that the boundary of $(X_{P(G)})_{\geq 0}$ gets mapped to the 
$(i-1)$-skeleton of $(Gr_{k,n})_{\geq 0}$.
This completes the proof that the cell decomposition of 
$(Gr_{k,n})_{\geq 0}$ is a CW complex.
\end{proof}

It has been conjectured that the cell decomposition of 
$(Gr_{k,n})_{\geq 0}$ is a regular CW complex which is homeomorphic to a ball.
In particular, if a CW complex is regular then it follows
that the Euler characteristic
of the closure of each cell is $1$.

In \cite{Williams2}, the third author proved that 
the poset of cells of $(\mathbb{G}/P)_{\geq 0}$ is thin and lexicographically shellable, 
hence in particular, {\it Eulerian}.  In other words,
the {\it Mobius function} of 
the poset of cells  takes values $\mu(\hat{0},x) = (-1)^{\rho(x)}$
for any $x$ in the poset.  As the Euler characteristic of 
a finite CW complex is defined to be the number of even-dimensional
cells minus the number of odd-dimensional cells, 
we obtain the following result.

\begin{corollary}
The Euler characteristic of the closure of each cell of 
$(Gr_{k,n})_{\geq 0}$ is $1$.
\end{corollary}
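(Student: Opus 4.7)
The plan is to combine the CW complex structure established in the preceding theorem with the Eulerian poset result of \cite{Williams2}. Because every finite CW complex has Euler characteristic equal to the alternating sum of the number of cells of each dimension, and because by Lemma~\ref{Closure} the closure $\overline{\Delta_G}$ of any cell is the union of $\Delta_G$ together with certain lower-dimensional cells (hence a finite CW subcomplex), we have
\[
\chi(\overline{\Delta_G}) \;=\; \sum_{\Delta' \leq \Delta_G} (-1)^{\dim \Delta'},
\]
where the sum runs over all cells $\Delta'$ of $(Gr_{k,n})_{\geq 0}$ contained in $\overline{\Delta_G}$.

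Next I would evaluate this alternating sum using the M\"obius function of the cell poset of $(Gr_{k,n})_{\geq 0}$. Adjoin a formal minimum $\hat{0}$ below all cells and rank the resulting poset so that $\rho(\hat{0}) = 0$ and $\rho(\Delta') = \dim \Delta' + 1$; under this convention, the Eulerian statement from \cite{Williams2} takes the form $\mu(\hat{0}, \Delta') = (-1)^{\rho(\Delta')} = (-1)^{\dim \Delta' + 1}$. The defining identity $\sum_{\hat{0} \leq \Delta' \leq \Delta_G} \mu(\hat{0}, \Delta') = 0$, valid whenever $\Delta_G \neq \hat{0}$, then gives
\[
1 \;-\; \sum_{\Delta' \leq \Delta_G} (-1)^{\dim \Delta'} \;=\; 0,
\]
so $\chi(\overline{\Delta_G}) = 1$.

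The only point requiring any care is aligning the rank function employed in \cite{Williams2} with the topological dimension of the cells, but this is purely bookkeeping. No substantive obstacle remains: all the genuinely nontrivial work has been carried out in the preceding theorem (which identifies closures of cells as finite CW subcomplexes of $(Gr_{k,n})_{\geq 0}$) and in \cite{Williams2} (which establishes that the cell poset is Eulerian). Assembling these two inputs via the M\"obius identity completes the argument.
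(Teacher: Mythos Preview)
Your proposal is correct and follows essentially the same approach as the paper: both combine the CW complex structure just established with the Eulerian property of the cell poset from \cite{Williams2}, and both read off the Euler characteristic as the alternating cell count. You simply make explicit the M\"obius-function computation that the paper leaves to the reader.
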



 \section{The face lattice of $P(G)$} \label{faces}
 
 We now consider the lattice of faces of $P(G)$, and 
 give a description in terms of 
 unions of matchings of $G$.  This description is very similar to the description
 of the face lattice of the Birkhoff polytopes, as described by Billera and 
 Sarangarajan \cite{Billera}.  In
 fact our proofs are very similar to those in \cite{Billera}; we just need
 to adapt the proofs of Billera and Sarangarajan
 to the setting of plane-bipartite graphs.

We begin by giving an inequality description of the polytope $P(G)$.
\begin{proposition} \label{Inequalities}
For any plane bipartite graph $G$, the polytope $P(G)$ is 
given by the following inequalities and equations:
$x_e \geq 0$ for all edges $e$, and 
$\sum_{e \ni v} x_e=1$ for each internal vertex $v$. 
If every edge of $G$ is used in some almost perfect matching, 
then the affine linear space defined by the above equations
is the affine linear space spanned by $P(G)$.
\end{proposition}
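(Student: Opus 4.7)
The plan is to establish the equality $P(G) = Q$, where $Q$ denotes the polyhedron cut out by the inequalities $x_e \geq 0$ and equations $\sum_{e \ni v} x_e = 1$ for internal $v$, and then derive the affine hull statement from elementary polyhedral theory. One inclusion, $P(G) \subseteq Q$, is immediate: the vertices of $P(G)$ are the indicator vectors $\chi_M$ of almost perfect matchings $M$, and by definition such a vector has entries in $\{0,1\}$ and satisfies $\sum_{e \ni v} (\chi_M)_e = 1$ at every internal vertex $v$, so $P(G) \subseteq Q$ by convexity.

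For the reverse inclusion $Q \subseteq P(G)$, I would first observe that $Q$ is bounded: every edge $e$ has at least one internal endpoint $v$, since the unique neighbor of a boundary vertex must be internal, and at that endpoint the constraint yields $x_e \leq \sum_{e' \ni v} x_{e'} = 1$. Hence $Q$ is a polytope, and it suffices to check that each of its vertices is the indicator vector of an almost perfect matching. I would argue this by noting that the constraint matrix of the equations is a submatrix of the vertex--edge incidence matrix of $G$, which is totally unimodular because $G$ is bipartite. Therefore every vertex of $Q$ has integer coordinates, forced by the bound $x_e \in [0,1]$ to lie in $\{0,1\}$; the active equations then say exactly that the corresponding edge set is incident to each internal vertex via a unique edge, i.e., is an almost perfect matching.

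For the affine hull claim, let $A$ be the affine subspace defined by the equations $\sum_{e \ni v} x_e = 1$. The containment $\mathrm{aff}(P(G)) \subseteq A$ is clear, and for the reverse I would invoke the standard fact that for a polyhedron of the form $Q = \{x \in A : x \geq 0\}$, the affine hull $\mathrm{aff}(Q)$ differs from $A$ only by the adjoining of equations $x_e = 0$ for coordinates that vanish identically on $Q$. The hypothesis that every edge $e$ appears in some almost perfect matching $M_e$ exhibits a point $\chi_{M_e} \in P(G) = Q$ with $e$-coordinate equal to $1$, so no coordinate vanishes on all of $Q$, whence $\mathrm{aff}(P(G)) = A$. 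The key technical input is total unimodularity of the bipartite incidence matrix; once that is invoked, the remainder of the argument reduces to identifying the $0$-$1$ vertices of $Q$ with almost perfect matchings, which follows directly from the defining equations, so I do not anticipate serious obstacles.
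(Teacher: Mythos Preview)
Your proof is correct and takes a somewhat different route from the paper in the key step of showing that every vertex of $Q$ is a $(0,1)$-vector. The paper argues this directly, in the style of the classical Birkhoff--von Neumann theorem: if a vertex $u$ of $Q$ were not a $(0,1)$-vector, the subgraph $H$ of edges with $0 < u_e < 1$ would have every internal vertex of degree at least $2$ (from $\sum_{e \ni v} u_e = 1$), hence contain a cycle or a boundary-to-boundary path; along such a cycle or path one builds an alternating $\pm 1$ vector $w$ (bipartiteness guarantees even length, so the signs close up on a cycle), and then $u \pm \epsilon w \in Q$ for small $\epsilon > 0$, contradicting extremality. Your appeal to total unimodularity of the bipartite incidence matrix packages this integrality as a single citation and is more efficient; the paper's hands-on argument is self-contained and makes the role of bipartiteness visible. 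The affine-hull step is essentially identical in the two proofs: the paper says that $P(G)$ meets the interior of the positive orthant, which is equivalent to your observation that no coordinate $x_e$ vanishes on all of $Q$, once one averages the witnessing matchings $M_e$.
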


\begin{proof}
Let $Q$ be the polytope defined by these inequalities. Clearly, $P(G)$ is contained in $Q$. Note that $Q$ lies in the cube $[0,1]^{E(G)}$ because if $e$ is any edge of $G$ and $v$ an endpoint of $e$ then everywhere on $Q$ we have $x_e = 1-\sum_{e' \ni v,\ e' \neq e} x_e \leq 1$.
Let $u$ be a vertex of $Q$. We want to show that $u$ is a $(0-1)$-vector. 
Suppose for the sake of contradiction that $u$ is not a $(0-1)$-vector; let $H$ be the subgraph of
$G$ consisting of edges $e$ for which $0 < u_e < 1$. Note that, if $v$ is a vertex of $H$, then $v$ has degree
at least $2$ in $H$ since $\sum_{e \ni v} u_e=1$. Therefore, $H$ contains a cycle or a path from one boundary vertex of $G$ to another. We consider the case where $H$ contains a cycle, the other case is similar. Let $e_1$, $e_2$, \ldots, $e_{2r}$ be the edges of this cycle; the length of the cycle is even because $G$ is bipartite. 
Define the vector $w$ by $w_{e_i}=(-1)^{i}$ and $w_e=0$ if $e \not \in \{ e_1, e_2, \ldots, e_{2n} \}$.  Let $\epsilon=\min_{i} (\min(u_{e_i}, u_{1-e_i}))$. Then $u+\epsilon w$ and $u-\epsilon w$ are both in $Q$, contradicting that $u$ was assumed to be a vertex of $Q$.

Now, assume that every edge of $G$ is used in some almost perfect matching. Then $P(G)$ meets the interior of the orthant $(\R_{\geq 0})^{E(G)}$, so the affine linear space spanned by $P(G)$ is the same as the affine linear space which cuts it out of this orthant.
\end{proof}

\begin{corollary} \label{DimP}
Suppose that every edge of $G$ is used in some almost perfect matching. Then $P(G)$ has dimension $\# \Faces(G)-1$.
\end{corollary}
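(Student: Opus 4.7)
The plan is to combine Proposition~\ref{Inequalities} with Euler's formula for the planar CW decomposition induced by $G$ on the disk.

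By Proposition~\ref{Inequalities}, under the standing hypothesis the affine span of $P(G)$ inside $\R^{E(G)}$ coincides with the affine subspace cut out by the $|V(G)|$ equations $L_v(\mathbf{x}) := \sum_{e \ni v} x_e = 1$, one per internal vertex $v$. Hence $\dim P(G) = |E(G)| - r$, where $r$ is the rank of the linear forms $\{L_v\}_{v \in V(G)}$ as functionals on $\R^{E(G)}$.

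The key step is to show $r = |V(G)|$. Suppose $\sum_v c_v L_v \equiv 0$ and read off the coefficient of $x_e$ for each edge $e$: if $e$ has one endpoint on the boundary and one internal endpoint $v$, we get $c_v = 0$; if both endpoints $v_1, v_2$ of $e$ are internal we get $c_{v_1} + c_{v_2} = 0$. Under the convention that $G$ has no isolated connected components (harmless, since such components may be removed without affecting the boundary measurements), every connected component of the subgraph on internal vertices contains a vertex adjacent to the boundary, and the chain of sign-reversing relations then propagates $c_v = 0$ throughout.

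It remains to identify $|E(G)| - |V(G)|$ with $\#\Faces(G) - 1$, which follows from Euler's formula applied to the CW decomposition of the closed disk obtained by augmenting $G$ with the $n$ boundary arcs between consecutive $b_i$'s: this decomposition has $n + |V(G)|$ vertices, $n + |E(G)|$ edges, and $\#\Faces(G)$ faces, so $V - E + F = \chi(\mathrm{disk}) = 1$ yields the identity. Combining the steps gives $\dim P(G) = \#\Faces(G) - 1$.

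The main obstacle is the linear-independence step: a bipartite-balanced isolated component of $G$ would produce a genuine dependence among the $L_v$'s (take $c_v = \pm 1$ alternating on the two color classes), and would simultaneously change $\#\Faces(G)$ in a compensating way so that the target equality still holds. The cleanest route to avoid tracking these compensations is to dispose of such components at the outset via the no-isolated-components convention, after which the two computations above decouple cleanly.
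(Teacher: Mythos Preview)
Your argument is correct under the no-isolated-components hypothesis you impose, and it reaches the same conclusion by a more elementary route than the paper. The paper does not compute the rank of the $L_v$ directly; instead it identifies the linear space $\{\mathbf{x}:\sum_{e\ni v}x_e=0\}$ with the relative homology $H_1(G,\partial G)$, then contracts the boundary circle to a point to get a graph $\tilde G$ on the sphere with $H_1(G,\partial G)\cong H_1(\tilde G)$, and finally uses that for any graph on $S^2$ one has $\dim H_1(\tilde G)=\#\Faces(\tilde G)-1$. Your approach trades this homological bookkeeping for a hands-on rank computation plus Euler's formula on the disk.

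The trade-off is exactly the one you flag at the end. The homological argument is insensitive to isolated components: the identity $\dim H_1(\tilde G)=\#\Faces(\tilde G)-1$ holds for disconnected graphs on the sphere as well (e.g.\ by Alexander duality), and the paper notes explicitly, right after its proof, that the corollary remains valid in that case. Your linear-independence step genuinely fails when an isolated bipartite component is present (the alternating $\pm 1$ vector on its color classes gives a dependence), and your Euler count simultaneously undercounts faces because the punctured region is no longer a cell. You assert these two errors cancel but do not verify it; a short direct check---$\dim P(C)=|E(C)|-|V(C)|+1$ for a connected isolated component $C$, while inserting $C$ into a face adds $|E(C)|-|V(C)|+1$ new regions---would close the gap. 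As written, you have proved the corollary only under the extra hypothesis.
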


\begin{proof}
By proposition~\ref{Inequalities}, the affine linear space spanned by $P(G)$ is parallel to the vector space cut out by the equations  $\sum_{e \ni v} x_e=0$. This is precisely $H_1(G, \partial G)$, where $\partial G$ is the set of boundary vertices of $G$. Let $\tilde{G}$ be the graph formed from $G$ by identifying the vertices of $\partial G$. We embed $\tilde{G}$ in a sphere by contracting the boundary of  the disc in which $G$ lives to a point. Then $H_1(G, \partial G) \cong H_1(\tilde{G})$, which has dimension $\# \Faces(\tilde{G})-1=\# \Faces(G)-1$.
\end{proof}


Note that Corollary~\ref{DimP} is correct even when some components of $G$ are not connected to the boundary, in which case some of the faces of $G$ are not discs.

\subsection{The lattice of elementary subgraphs}

Following \cite{Matching}, we call a subgraph $H$ of $G$ 
{\it elementary} if it contains every vertex of $G$ and 
if every edge of $H$ is used in some almost perfect matching of $H$.
Equivalently, the edges of $H$ are obtained by taking
a union of several almost perfect matchings of $G$.
(To see the equivalence, if $\Edges(H) = \bigcup M_i$, then each edge of $H$ occurs in some $M_i$, which is an almost perfect matching of $H$. Conversely, if $H$ is elementary, then let $M_1$, $M_2$, \dots, $M_r$ be the almost perfect matchings of $G$ contained in $H$ then, by the definition of ``elementary", $\Edges(H) = \bigcup M_i$.)

The main result of this section is the following.

\begin{theorem}\label{facelattice}
The face lattice of $P(G)$ is isomorphic to the lattice of all elementary subgraphs
of $G$, ordered by inclusion.
\end{theorem}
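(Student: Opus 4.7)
My plan is to construct an explicit order-preserving bijection between the faces of $P(G)$ and the elementary subgraphs of $G$, mirroring the treatment of the Birkhoff polytope by Billera and Sarangarajan. I may assume without loss of generality that every edge of $G$ belongs to some almost perfect matching (edges not used in any matching are forced to $0$ on $P(G)$ and may be deleted). By Proposition~\ref{Inequalities}, every face of $P(G)$ arises by adjoining to the equations $\sum_{e \ni v} x_e = 1$ some additional equalities $x_e = 0$, so every face has the form
$$F_S := \{p \in P(G) : x_e = 0 \text{ for all } e \in S\}$$
for some $S \subseteq E(G)$.

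I would then define the two candidate inverse maps. Given a face $F$ of $P(G)$, let $H(F)$ be the subgraph of $G$ whose edge set is $\bigcup_{M \in \Vert(F)} M$, or equivalently the set of edges $e$ for which $x_e$ is not identically zero on $F$. Each such edge belongs to some almost perfect matching of $G$ (namely some vertex of $F$) which is itself contained in $H(F)$, so $H(F)$ is elementary by definition. Conversely, given an elementary subgraph $H$, set
$$F(H) := \{p \in P(G) : x_e = 0 \text{ for all } e \notin E(H)\};$$
since each $x_e \geq 0$ is a defining inequality of $P(G)$, this is visibly a face.

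The main step is to check that these maps are mutually inverse, and this is where I expect the only real subtlety. The identity $H(F(H)) = H$ is immediate from the definition of \emph{elementary}: the vertices of $F(H)$ are precisely the almost perfect matchings of $G$ contained in $E(H)$, and by elementarity their union is all of $E(H)$. For the reverse identity $F(H(F)) = F$, the inclusion $F \subseteq F(H(F))$ is clear from the definition of $H(F)$. The opposite inclusion reduces to showing that every vertex of $F(H(F))$ is already a vertex of $F$: such a vertex is an almost perfect matching $M$ with $M \subseteq E(H(F))$, and hence satisfies all the equations $x_e = 0$ for $e \notin H(F)$ that cut $F$ out of $P(G)$, so $M \in F$; since $M$ is a vertex of the ambient polytope $P(G)$ it is automatically a vertex of $F$. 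Finally, both maps $F \mapsto H(F)$ and $H \mapsto F(H)$ are manifestly inclusion-preserving, so we obtain the claimed isomorphism of lattices.
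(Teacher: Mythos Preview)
Your proof is correct and follows essentially the same approach as the paper: both define the map $F \mapsto H(F)$ (the paper's $\gamma$) via the edges on which $x_e$ does not vanish, the inverse $H \mapsto F(H)$ (the paper's $\phi$) by intersecting with the hyperplanes $\{x_e=0\}$ for $e\notin H$, and then verify these are mutually inverse order-preserving bijections using Proposition~\ref{Inequalities} and the definition of elementary. The only remark is that in your verification of $F(H(F))\subseteq F$, the phrase ``the equations $x_e=0$ for $e\notin H(F)$ that cut $F$ out of $P(G)$'' is slightly compressed; the point, which you set up earlier, is that $F=F_S$ for some $S$, and since every $e\in S$ has $x_e\equiv 0$ on $F$ we get $S\subseteq E(G)\setminus E(H(F))$, so any $M\subseteq E(H(F))$ indeed satisfies the defining equations of $F_S$.
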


\begin{proof}
We give the following maps between faces of $P(G)$ and elementary subgraphs. If $F$ is a face of $P(G)$, let $K(F)$ be the set of edges $e$ of 
$G$ such that $x_e$ is not identically zero on $F$, and let $\gamma(F)$ be the subgraph of $G$ with edge set $K(F)$. Since $F$ is a face of a $(0-1)$-polytope,
$F$ is the convex hull of the characteristic vectors of some set of matchings, and $\gamma(F)$ is the union of these matchings. Thus, $F \mapsto \gamma(F)$ is a map from faces of $P(G)$ to 
elementary subgraphs. Conversely, if $H$ is a subgraph of $G$, let $\phi(H) = P(G) \cap \bigcap_{e \not \in H} \{ x_e =0 \}$. Since $\{ x_e =0 \}$ defines a face of $P(G)$, the intersection $\phi(H)$ is a face of $P(G)$. From the description in Proposition~\ref{Inequalities}, every face of $P(G)$ is of the form $\phi(H)$ for some subgraph $H$ of $G$. Note also that $\phi(H)=P(H)$.

We need to show that these constructions give mutually inverse bijections between the faces of $P(G)$ and the elementary subgraphs. For any face $F$ of $P(G)$, it is clear that $\phi(\gamma(F)) \supseteq F$. Suppose for the sake of contradiction that $F \neq \phi(\gamma(F))$. Then $F$ is contained in some proper face of $\phi(\gamma(F))$; let this proper face be $\phi(H)$ for some $H \subsetneq \gamma(F)$. Then there is an edge $e$ of $\gamma(F)$ which is not in $H$. By the condition that $e$ is in $\gamma(F)$, the function $x_e$ cannot be zero on $F$, so $F$ is not contained in $\phi(H)$ after all. We deduce that $F=\phi(\gamma(F))$.

Conversely, let $H$ be an elementary subgraph of $G$. It is clear that $\gamma(\phi(H)) \subseteq H$. Suppose for the sake of contradiction that there is an edge $e$ of $H$ which
is not in $\gamma(\phi(H))$. Since $H$ is elementary, there is a matching $M$ of $H$ which contains the edge $e$. Let $\chi_M$ be the corresponding vertex of $\phi(H)$. Then $x_e$ is not zero on $\phi(H)$, so $e$ is in $\gamma(\phi(H))$ after all and we conclude that $H=\gamma(\phi(H))$.
\end{proof}

 The minimal nonempty elementary subgraphs of $G$ are the matchings, corresponding
 to vertices of $P(G)$.  
 
 \begin{corollary}
 Consider a cell $\Delta_G$ of $(Gr_{k,n})_{\geq 0}$ parameterized by 
 a plane-bipartite graph $G$.  For any cell $\Delta_H$ in the closure of $\Delta_G$,
 the corresponding polytope $P(H)$ is a face of $P(G)$.
 \end{corollary}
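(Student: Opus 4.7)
The plan is to match the cells in $\overline{\Delta_G}$ with faces of $P(G)$ via the toric structural input promised in the introduction: every face of $X_G = X_{P(G)}$ has the form $X_{G'}$ for some elementary subgraph $G'$ of $G$, and $m_G$ restricted to $X_{G'}$ agrees with $m_{G'}$. Combined with Theorem \ref{facelattice}, which identifies the face of $P(G)$ indexed by $G'$ as $P(G')$, it will suffice to show that any cell $\Delta_H \subseteq \overline{\Delta_G}$ coincides with $\Delta_{G'}$ for some elementary subgraph $G'$ of $G$; then $P(H) = P(G')$ is the desired face.

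First I would verify that $m_G$ extends continuously to the whole closed ball $(X_G)_{\geq 0}$ (by Lemma \ref{important}) and sends it surjectively onto $\overline{\Delta_G}$. Surjectivity follows because $m_G$ sends the totally positive part $(X_G)_{>0}$ onto $\Delta_G$ by Proposition \ref{prop:plabic_cells}, the ambient set $(X_G)_{\geq 0}$ is compact (homeomorphic to $P(G)$), and its continuous image in $(Gr_{k,n})_{\geq 0}$ is therefore a closed set containing $\Delta_G$; by connectedness this image is contained in $\overline{\Delta_G}$, so the two sets coincide.

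Next, I would fix a cell $\Delta_H \subseteq \overline{\Delta_G}$, choose any $p \in \Delta_H$, and pick a preimage $q \in m_G^{-1}(p)$. Let $(X_{G'})_{\geq 0}$ be the unique toric face of $(X_G)_{\geq 0}$ whose relative interior $(X_{G'})_{>0}$ contains $q$. By the restriction property, $p = m_G(q) = m_{G'}(q) \in m_{G'}((X_{G'})_{>0}) = \Delta_{G'}$. Since the positive Grassmann cells of $(Gr_{k,n})_{\geq 0}$ are pairwise disjoint and $p$ lies in both $\Delta_H$ and $\Delta_{G'}$, we must have $\Delta_H = \Delta_{G'}$. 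The elementary subgraph $G'$ then gives, through Theorem \ref{facelattice}, a face $P(G') = P(H)$ of $P(G)$.

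The main subtlety is the interpretation of $H$: a plane-bipartite graph parameterizing a given cell is not unique, so the corollary should be read as asserting the existence of a parameterizing graph whose matching polytope is a face of $P(G)$, namely the elementary subgraph $G'$ produced above. The essential external ingredient is the structural description of the faces of $X_G$ announced in the introduction; once it is in hand, the stratification of $(X_G)_{\geq 0}$ into toric faces combined with the disjointness of positive Grassmann cells makes the identification automatic.
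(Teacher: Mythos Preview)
Your argument is correct in outline but takes a genuinely different route from the paper's proof. The paper's proof is a two-line citation: it invokes \cite[Theorem 18.3]{Postnikov} directly to say that every cell in $\overline{\Delta_G}$ can be parameterized by a plane-bipartite graph $H$ obtained from $G$ by deleting edges; since such an $H$ is perfectly orientable it is an elementary subgraph of $G$, and Theorem~\ref{facelattice} then gives $P(H)$ as a face of $P(G)$. No toric geometry is used at all.

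Your approach instead re-derives the combinatorial input (that cells in the closure come from elementary subgraphs of $G$) from the paper's own toric machinery: surjectivity of $m_G$ onto $\overline{\Delta_G}$, the stratification of $(X_G)_{\geq 0}$ into relative interiors of faces $(X_{G'})_{>0}$, and the restriction property $m_G|_{X_{G'}}=m_{G'}$ announced in the introduction. This is more self-contained relative to the paper (it avoids importing the specific statement from \cite{Postnikov}), at the cost of relying on the restriction property, which the paper states in the introduction but does not prove as a standalone lemma; you would need to justify it, e.g.\ by combining Theorem~\ref{facelattice} (which identifies the face over $G'$ as $P(G')$, with coordinates $x_e=0$ for $e\notin G'$) with Corollary~\ref{TalaskaCor} (so that on that face only flows, equivalently matchings, supported on $G'$ contribute). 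One small slip: your phrase ``by connectedness this image is contained in $\overline{\Delta_G}$'' is not the right reason---the containment follows from density of $(X_G)_{>0}$ in $(X_G)_{\geq 0}$ and continuity of $m_G$, not from connectedness. Your closing remark about the non-uniqueness of $H$ is apt and matches how the paper's own proof implicitly reads the statement: $H$ is \emph{chosen} to be the edge-deleted subgraph.
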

 
 \begin{proof}
 By \cite[Theorem 18.3]{Postnikov}, every cell in the closure of $\Delta_G$
 can be parameterized using a plane-bipartite graph $H$ which is obtained by deleting
 some edges from $G$.  $H$ is perfectly orientable and hence is an elementary
 subgraph of $G$.  Therefore by Theorem \ref{facelattice}, the polytope
 $P(H)$ is a face of $P(G)$.
 \end{proof}

 \subsection{Facets and further combinatorial structure of $P(G)$}

 We now give a description of the facets of $P(G)$.   
 Let us say that two edges $e$ and $e'$ of $G$ are {\it equivalent} 
 if they separate the same pair of (distinct)
 faces $f$ and $f'$ with the same orientation.
 That is, if we travel across $e$ from face $f$ to $f'$, the black vertex of $e$
 will be to our left if and only if when we travel across $e'$ from $f$ to $f'$,
 the black vertex of $e'$ is to our left.  
 
 \begin{lemma} \label{restrictions}
 If every edge of $G$ is used in an almost perfect matching then two edges $e$ and $e'$ are equivalent if and only if the linear functionals $x_e$ and $x_{e'}$ have the same restriction to $P(G)$.
 \end{lemma}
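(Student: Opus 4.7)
The plan is to reduce the lemma to a question about the translation space of the affine hull of $P(G)$, which can be identified with a space of $1$-cycles by the proof of Corollary~\ref{DimP}. Throughout, write $T$ for the linear subspace $\{w \in \R^{E(G)} : \sum_{e \ni v} w_e = 0 \text{ for all } v \in V(G)\}$.

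First, I would observe that the hypothesis that every edge is used in some almost perfect matching is exactly what Proposition~\ref{Inequalities} needs to guarantee that the affine span of $P(G)$ equals the affine space cut out by the equations $\sum_{e \ni v} x_e = 1$. In particular $P(G)$ has nonempty relative interior in its affine hull, so the linear functionals $x_e$ and $x_{e'}$ have the same restriction to $P(G)$ if and only if $x_e - x_{e'}$ vanishes identically on $T$.

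Next, following the proof of Corollary~\ref{DimP}, I would identify $T$ with $H_1(\tilde G) = H_1(G, \partial G)$, where $\tilde G$ sits in a sphere. Bipartiteness of $G$ gives a canonical $\pm 1$-cycle $c_f \in T$ for each face $f$: traversing $\partial f$ counterclockwise, consecutive edges alternate between ``white-to-black'' and ``black-to-white'', and one assigns $+1$ to the former and $-1$ to the latter. The unsigned sums at each internal vertex on $\partial f$ cancel, so $c_f \in T$, and the $c_f$ span $T$ subject to the single relation $\sum_f c_f = 0$. Hence $x_e - x_{e'}$ vanishes on $T$ exactly when $\sigma(e,f) = \sigma(e',f)$ for every face $f$, where $\sigma(e,f) \in \{0, \pm 1\}$ is the coefficient of $e$ in $c_f$. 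A short case analysis on whether $e, e'$ lie on the boundary of a common face then forces $e$ and $e'$ to lie on the boundary of the same pair of faces $f, f'$, and only one further condition $\sigma(e,f) = \sigma(e',f)$ survives (the analogous condition at $f'$ follows from $\sum_f c_f = 0$).

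The main obstacle I foresee is the bookkeeping of signs in the final step, where one must verify that $\sigma(e,f) = +1$ corresponds, under the right-angle rotation from ``walking along $\partial f$ counterclockwise'' to ``crossing $e$ from $f$ to $f'$'', to the geometric condition that the black endpoint of $e$ lies on the left. Once this dictionary between the algebraic sign and the geometric ``same orientation'' condition is nailed down, both directions of the biconditional follow simultaneously from the linear algebra of the pairing of $T$ with its dual.
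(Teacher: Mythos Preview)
Your proposal is correct and follows essentially the same route as the paper: both reduce the question to whether $x_e - x_{e'}$ vanishes on the translation space $T = L \cong H_1(G,\partial G)$, and then read off the answer from the face structure of the planar embedding. The only cosmetic difference is that you work primally, pairing $x_e - x_{e'}$ against the face-cycle spanning set $\{c_f\}$ of $T$, while the paper works dually, identifying $T^* \cong H^1(G,\partial G)$ with functions on $\Faces(G)$ summing to zero and sending $x_e$ to the $\pm 1$ indicator of the two faces it separates; your $\sigma(e,f)$ is exactly the paper's value of that function at $f$, so the sign bookkeeping you flag is the same in both arguments.
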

 
 \begin{proof}
By Proposition~\ref{Inequalities}, the affine linear space spanned by $P(G)$ is cut out by the equations  $\sum_{e \ni v} x_e=1$, where $v$ runs through the internal vertices of $G$. Let $L$ be the linear space cut out by the equations $\sum_{e \ni v} x_e=0$; the polytope $P(G)$ is parallel to $L$ and thus the functionals $x_e$ and $x_{e'}$ have the same restriction to $P(G)$ if and only if they have the same restriction to $L$. In the proof of Corollary~\ref{DimP} we identified $L$ with $H_1(G, \partial G)$. So we just want to determine when the restrictions of $x_e$ and $x_{e'}$ to $H_1(G, \partial G)$ are the same. 

The restrictions of $x_e$ and $x_{e'}$ to $H_1(G, \partial G)$ are elements of the dual vector space $H^1(G, \partial G)$. We can identify $H^1(G, \partial G)$ with the vector space of functions on $\Faces(G)$ summing to zero as follows: Map $\R^{E(G)}$ to $\R^{\Faces(G)}$ by sending an edge $e$ to the function which is $1$ on one of the faces it borders and $-1$ on the other; the sign convention is that the sign is positive or negative according to whether $F$ lies to the right or left of $e$, when $e$ is oriented from black to white. Then $H^1(G, \partial G)$, which is defined as a quotient of $\R^{E(G)}$, is the image of this map.

We now see that $x_e$ and $x_{e'}$ restrict to the same functional on $L$ if and only if they correspond to the same function on the faces of $G$. This occurs if and only if they separate the same pair of faces with the same orientation.
 \end{proof}

 \begin{theorem}
Suppose that $G$ is elementary. Then the facets of 
 $P(G)$ correspond to the elementary subgraphs of the form $G \setminus E$, where $E$ is an equivalence class as above.
  \end{theorem}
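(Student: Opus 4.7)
The approach is to combine the inequality description of $P(G)$ in Proposition~\ref{Inequalities}, the face lattice bijection of Theorem~\ref{facelattice}, and the characterization of equivalent edges in Lemma~\ref{restrictions}. Since $G$ is elementary, every edge lies in some almost perfect matching, so Proposition~\ref{Inequalities} guarantees that the inequalities $x_e \geq 0$ cut $P(G)$ out of its affine hull. Consequently every facet of $P(G)$ has the form $F_e := P(G) \cap \{x_e = 0\}$ for some edge $e$ of $G$, and the task reduces to identifying which edges $e'$ vanish identically on $F_e$.

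The main step is to show that $F_e = P(G \setminus [e])$, where $[e]$ denotes the equivalence class of $e$. By Lemma~\ref{restrictions}, every $e' \in [e]$ satisfies $x_{e'} = x_e$ on $P(G)$, so $x_{e'}$ vanishes on $F_e$. Conversely, suppose $x_{e''}$ vanishes identically on $F_e$ for some edge $e''$. Then, restricted to $\mathrm{aff}(P(G))$, both $x_{e''}$ and $x_e$ vanish on the codimension-one affine subspace $\mathrm{aff}(F_e)$, so $x_{e''} - \lambda x_e$ vanishes on $\mathrm{aff}(P(G))$ for some scalar $\lambda$. Evaluating at the $0/1$-vertices of $P(G)$, this says $\mathbf{1}(e'' \in M) = \lambda \cdot \mathbf{1}(e \in M)$ for every almost perfect matching $M$ of $G$. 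Since $G$ is elementary, both $e$ and $e''$ lie in some matching; comparing the resulting $0$'s and $1$'s across all matchings forces $\lambda = 1$ (one rules out $\lambda = 0$ by a matching containing $e''$, and $\lambda < 0$ by nonnegativity at a matching containing $e$). Hence $x_{e''} = x_e$ on $P(G)$, so $e'' \sim e$ by Lemma~\ref{restrictions}. Therefore the set of edges on which the corresponding coordinate vanishes identically on $F_e$ is exactly $[e]$, and the bijection of Theorem~\ref{facelattice} gives $\gamma(F_e) = G \setminus [e]$, so $F_e = P(G \setminus [e])$ and $G \setminus [e]$ is elementary.

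Conversely, if $E$ is an equivalence class such that $G \setminus E$ is elementary, then from the description $\phi(H) = P(H)$ in the proof of Theorem~\ref{facelattice} together with Lemma~\ref{restrictions},
\[
P(G \setminus E) \;=\; P(G) \cap \bigcap_{e \in E}\{x_e = 0\} \;=\; P(G) \cap \{x_{e_0} = 0\}
\]
for any fixed $e_0 \in E$. The functional $x_{e_0}$ attains the value $1$ at the vertex of $P(G)$ coming from any matching containing $e_0$ and attains the value $0$ at the vertex coming from any matching of $G \setminus E$; so $x_{e_0}$ is a nonconstant affine functional on $P(G)$ and its zero locus cuts out a face of codimension one, i.e., a facet. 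This yields the bijective correspondence claimed.

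The main obstacle in this plan is pinning down that the scalar $\lambda$ in the second paragraph equals exactly $1$; everything else is essentially a repackaging of earlier results. Fortunately, this follows cleanly from the $0/1$ nature of the vertices of $P(G)$ combined with the elementarity hypothesis, which supplies enough matchings to rule out the cases $\lambda \leq 0$ and $\lambda > 0, \lambda \neq 1$.
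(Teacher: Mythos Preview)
Your forward direction is fine and essentially matches the paper's argument, just made a bit more explicit: you spell out the linear-algebra step that if $x_{e''}$ vanishes on the facet $F_e$ then $x_{e''}=\lambda x_e$ on $\mathrm{aff}(P(G))$, and then use the $0/1$ vertices to force $\lambda=1$. (Your case analysis for $\lambda$ is slightly underwritten: the cleanest way is to evaluate at a matching containing $e$, which gives $\lambda\in\{0,1\}$, and then at a matching containing $e''$, which rules out $\lambda=0$.)

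The gap is in your converse. From ``$x_{e_0}$ is a nonconstant affine functional on $P(G)$ that is nonnegative and attains the value $0$'' you conclude that $\{x_{e_0}=0\}\cap P(G)$ has codimension one. That inference is false in general: on the standard simplex in $\R^3$, the functional $x_1+x_2$ is nonconstant and nonnegative, but its zero set is the single vertex $(0,0,1)$. In the language of inequality descriptions, the system $x_e\ge 0$ in Proposition~\ref{Inequalities} is redundant (equivalent edges give the same inequality), and you have not ruled out that a given $x_{e_0}\ge 0$ is a further redundant inequality whose equality locus is a lower-dimensional face.

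The paper closes this gap differently: it observes that all edges of an equivalence class $E$ separate the same pair of faces of $G$, so $G\setminus E$ has exactly one fewer face than $G$, and then invokes Corollary~\ref{DimP} to get $\dim P(G\setminus E)=\dim P(G)-1$. You can also repair your argument internally, without the dimension formula: since $G\setminus E$ is elementary, $P(G\setminus E)$ is a nonempty face by Theorem~\ref{facelattice}, hence lies in some facet, which by your forward direction equals $P(G\setminus E')$ for an equivalence class $E'$; then $G\setminus E\subseteq G\setminus E'$ gives $E'\subseteq E$, and since $E$ and $E'$ are equivalence classes this forces $E'=E$, so $P(G\setminus E)$ is itself the facet.
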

 
 \begin{proof}
First, note that if $e$ and $e'$ are not equivalent then, by Lemma~\ref{restrictions}, $x_e$ and $x_{e'}$ have different restrictions to $P(G)$. Thus, there is no facet of $P(G)$ on which they both vanish. On the other hand, if $e$ and $e'$ are equivalent then, again by Lemma~\ref{restrictions}, on every facet of $P(G)$ where $x_e$ vanishes, $x_{e'}$ also vanishes. So we see that every facet of $P(G)$ is of the form $\phi(G \setminus E)$, where $E$ is an equivalence class in $E(G)$. (Here $\phi$ is the function introduced in the proof of Theorem~\ref{facelattice}.)

If $\phi(G \setminus E)$ is a facet of $P(G)$ then $G \setminus E$ is elementary, by Theorem~\ref{facelattice}. Conversely, if $G \setminus E$ is elementary then $\phi(G \setminus E)$ is a face of $P(G)$. Since all the edges of $E$ separate the same pair of faces, $G \setminus E$ has one less face than $G$, so $\phi(G \setminus E)$ is a facet of $P(G)$, as desired.
 \end{proof}

 As a special case of the preceding propositions, we get the following.
 
 \begin{remark}\label{edge}
 Let $N$ be a face of $P(G)$ and let $r$ be the number of regions into which the edges
 of $H(N)$
 divide the disk in which $G$ is embedded.  Then $N$ is an edge of $P(G)$ if and only if 
 $r=2$.  
 Equivalently, two vertices $v_{\O_1}$ and $v_{\O_2}$ of $P(G)$ form an edge
 if and only if $\O_2$ can be obtained from $\O_1$ by switching the orientation along
 a self-avoiding path or cycle in $\O_1$.
 \end{remark}
 
 Recall that the {\it Birkhoff polytope} $B_n$ is the convex hull of 
the $n!$ points
 in $\R^{n^2}$
 $\{X(\pi): \pi \in S_n\}$ where $X(\pi)_{ij}$ is equal to $1$ if $\pi(i)=j$ and is equal to $0$
 otherwise.  It is well-known that $B_n$ is an $(n-1)^2$ dimensional polytope, whose 
 face lattice of $B_n$ is isomorphic to the lattice of all elementary subgraphs of the 
 complete bipartite graph $K_{n,n}$ ordered by inclusion \cite{Billera}.
 Our polytopes $P(G)$ can be thought of as analogues 
of the Birkhoff polytope for 
 planar graphs embedded in a disk.

\section{Appendix: numerology of the polytopes $P(G)$}\label{Numerology}

In this section we give some statistics about a few of the polytopes $P(G)$.  
Our computations were made with the help of the software {\tt polymake} \cite{Polymake}.

\begin{figure}[h]
\centering
\includegraphics[height=1.6in]{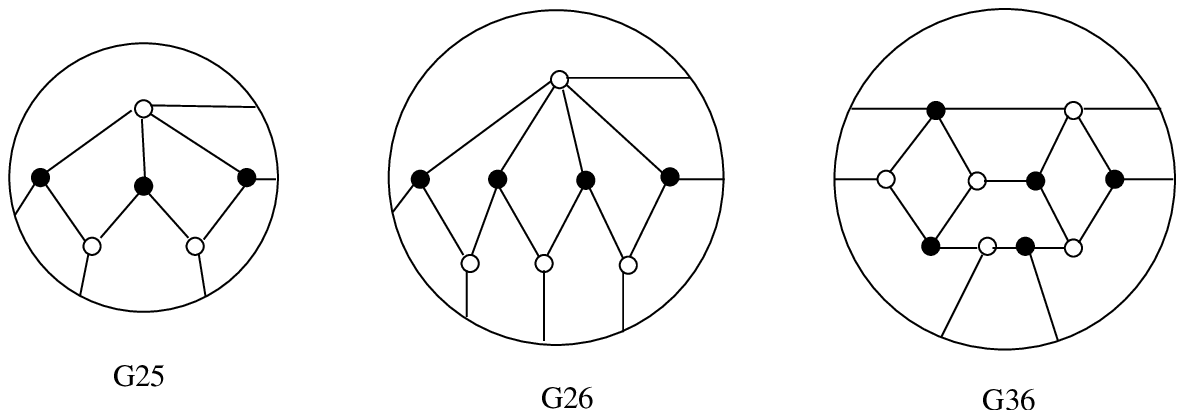}
\caption{}
\label{examples}
\end{figure}

Let $G24$ denote the plane-bipartite graph from Figure \ref{Graph}, and 
let $G25$, $G26$, and $G36$ denote the plane-bipartite graphs shown in Figures \ref{examples}.
These plane-bipartite graphs give parameterizations of the top cells in $(Gr_{2,4})_{\geq 0}, (Gr_{2,5})_{\geq 0}, (Gr_{2,6})_{\geq 0}$, and $(Gr_{3,6})_{\geq 0}$,
respectively.  

The $f$-vectors of the matching polytopes $P(G24)$, $P(G25)$, $P(G26)$ and $P(G36)$ are
\ $(7,17,18,8)$, \ $(14,59,111,106,52,12)$, \ $(25,158,440,664,590,315,98,16)$, \ and \ 
$(42,353,1212,2207,2368,1557,627,149,19)$ respectively.
The Ehrhart series for $P(G24)$, $P(G25)$ and $P(G26)$, 
which give the Hilbert series of the corresponding toric varieties, are 
$\frac{1+2t+t^2}{(1-t)^5}$, $\frac{1+7t+12t^2+4t^3}{(1-t)^7}$, 
and $\frac{1+16t+64t^2+68t^315t^4}{(1-t)^9}$.
The volumes of the four polytopes are $\frac{1}{6}= \frac{4}{4!}$, $\frac{1}{30}=\frac{24}{6!}$, $\frac{41}{10080}=\frac{164}{8!}$, and $\frac{781}{181440}=\frac{1562}{9!}$. Thus, the degrees of the corresponding toric varieties are $4$, $24$, $164$, and $1562$.

\begin{proposition}
Let $G2n$ (for $n \geq 4$) be the family of graphs that extend the 
first two graphs shown in Figure \ref{examples}.  Then
the number of vertices of $G2n$ is given by 
$f_0(G2n)= {n \choose 3} + n-1$.  
\end{proposition}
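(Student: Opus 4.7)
The plan is to count almost perfect matchings of $G2n$ directly, since by Theorem~\ref{facelattice} (and the observation that matchings are exactly the minimal nonempty elementary subgraphs) the vertices of $P(G)$ correspond bijectively to almost perfect matchings of $G$. So it suffices to show that $G2n$ has $\binom{n}{3}+n-1$ almost perfect matchings.

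First I would give an explicit description of $G2n$, extending the pattern visible in Figures~\ref{Graph} and~\ref{examples}. Since $G2n$ parameterizes the top cell of $(Gr_{2,n})_{\geq 0}$ of dimension $2n-4$, and is reduced, Corollary~\ref{DimP} forces $\#\Faces(G2n)=2n-3$; accordingly $G2n$ is built from $n-2$ copies of a single ``unit cell,'' each contributing two faces, with the $n$ boundary vertices placed around the disk so that $G2(n+1)$ is obtained from $G2n$ by appending one additional unit cell together with one new boundary vertex $b_{n+1}$.

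Second I would prove the formula by induction on $n$, with base case $n=4$ handled by the value $f_0(P(G24))=7$ already tabulated in this section. Writing $a_n:=f_0(P(G2n))$, the predicted formula is equivalent to the recursion
\[
a_{n+1} - a_n \;=\; \binom{n+1}{3} - \binom{n}{3} + 1 \;=\; \binom{n}{2} + 1,
\]
which I would verify by classifying the almost perfect matchings of $G2(n+1)$ according to how they meet the edges of the newly appended unit cell. Each matching of $G2n$ extends canonically to $G2(n+1)$, accounting for $a_n$ matchings; the remaining matchings are those in which $b_{n+1}$ is matched, which should be parameterized by the choice of a second matched boundary vertex $b_i$ for $i\in[n]$ (contributing $n$ choices) together with one extra matching produced by the alternating 4-cycle that always appears in the new unit cell (contributing one further matching); accounting for the over-count when $i=n$ yields exactly $\binom{n}{2}+1$ new matchings.

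The main obstacle is pinning down the bookkeeping for the ``$+1$'' coming from alternating-cycle flips and making sure the description of $G2n$ is precise enough to justify it uniformly in $n$. An alternative and conceptually cleaner route is provided by the identity
\[
\binom{n}{3}+n-1 \;=\; \binom{n}{2}+\binom{n-1}{3},
\]
which follows from $\binom{n}{3}-\binom{n-1}{3}=\binom{n-1}{2}$ together with $\binom{n-1}{2}+(n-1)=\binom{n}{2}$. This would let one stratify matchings by their source set $I\in\binom{[n]}{2}$ (a basis of the uniform matroid $U_{2,n}$, as in the proposition after Theorem~\ref{TalaskaTheorem}), with one ``canonical'' matching per $I$ giving $\binom{n}{2}$ matchings, and then identify the remaining $\binom{n-1}{3}$ matchings with triples in $\binom{[n-1]}{3}$ indexing the available alternating-cycle flips that preserve the source set. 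Either route reduces the proposition to a finite combinatorial check once the structure of $G2n$ is made explicit.
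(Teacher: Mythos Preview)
The paper's own proof is only a one-line hint: induct on $n$ by removing the leftmost black vertex, left as an exercise. Your overall strategy---count almost perfect matchings and induct on $n$ by comparing $G2n$ with $G2(n+1)$---is therefore essentially the same approach the paper suggests, just phrased as adding a piece rather than removing one.

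However, the specific inductive count you propose does not add up. You correctly compute that the induction requires
\[
a_{n+1}-a_n=\binom{n}{2}+1,
\]
but your parameterization of the ``new'' matchings gives only about $n$ of them: a choice of a second source $b_i$ with $i\in[n]$ (that is $n$ options), plus one alternating-cycle flip, minus an overcount. No arithmetic of the form $n+1-(\text{something nonnegative})$ can equal $\binom{n}{2}+1$ once $n\geq 4$; already for $n=4$ you need $7$ new matchings, not $4$ or $5$. The error is the tacit assumption that a source set $\{i,n+1\}$ supports a unique matching of $G2(n+1)$. In fact the number of matchings over a fixed source set grows with how far apart the two sources are along the strip, because there are several independent alternating $4$-cycles available in between. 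So the ``one extra flip'' picture is wrong; you must actually track how many matchings lie over each source set containing $n+1$.

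Your alternative decomposition $\binom{n}{3}+n-1=\binom{n}{2}+\binom{n-1}{3}$ is numerically correct but has the same defect: it is not true that each of the $\binom{n}{2}$ source sets carries a single ``canonical'' matching with the remainder indexed by $\binom{[n-1]}{3}$ unless you supply a concrete bijection, and you have not. The cleanest fix is to follow the paper's hint literally: delete the leftmost black vertex (say of degree $d$), split the matchings of $G2(n+1)$ according to which of its $d$ incident edges is used, and identify each resulting class with matchings of an explicitly smaller graph. Once you write down $G2n$ precisely, this case split gives the recursion directly.
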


\begin{proof}
This can be proved by induction on $n$ by removing the leftmost
black vertex.  We leave this as an exercise for the reader.
\end{proof}

Note that in general there is more than one plane-bipartite graph giving a parameterization of a given cell.
But even if two plane-bipartite graphs $G$ and $G'$ correspond to the same cell, in general
we have $P(G) \neq P(G')$.  For example, the plane-bipartite graph in Figure \ref{Alternate} gives a parameterization
of the top cell of $(Gr_{2,6})_{\geq 0}$.  Let us refer to this graph as $\hat{G}26$.  However, 
$P(\hat{G}26) \neq P(G26)$:  the $f$-vector of $P(\hat{G}26)$ is 
$(26, 165, 460, 694, 615, 326, 100, 16)$.

\begin{figure}[h]
\centering
\includegraphics[height=1in]{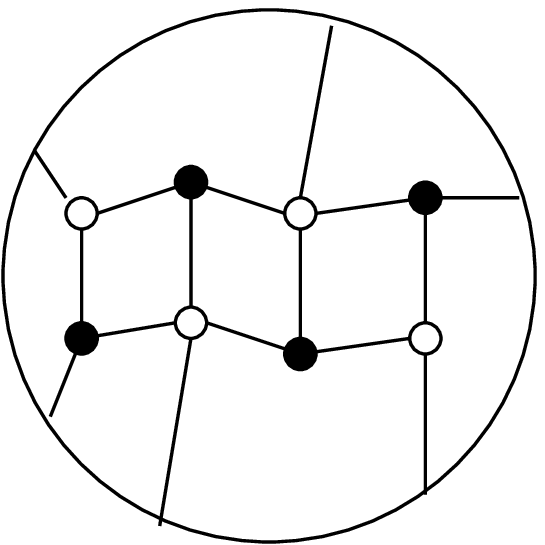}
\caption{}
\label{Alternate}
\end{figure}


\end{document}